\documentclass[onefignum,onetabnum]{siamart251216}

\hypersetup{
  colorlinks=true,linkcolor=blue,citecolor=blue,urlcolor=blue,
  pdftitle={Global and Unconditional R-Linear Convergence
of Rank-Compressed Weighted LMSD Sweeps for Strictly Convex Quadratics},
  pdfauthor={Shutai Yang and Ya-xiang Yuan},
  pdfkeywords={limited memory steepest descent, algebraic rank compression, spectral conjugacy, Krylov subspaces, finite termination, homogeneous dynamics, R-linear convergence}
}

\usepackage{graphicx}
\graphicspath{{figures/}}
\usepackage{amssymb,amsfonts,mathtools,bm,mathrsfs}
\usepackage{microtype}
\usepackage{array,booktabs,tabularx}
\usepackage{flafter}
\usepackage{placeins}
\usepackage{enumitem}
\usepackage{algpseudocode}

\allowdisplaybreaks
\numberwithin{equation}{section}

\newsiamthm{assumption}{Assumption}
\newsiamremark{remark}{Remark}

\crefname{theorem}{Theorem}{Theorems}
\crefname{lemma}{Lemma}{Lemmas}
\crefname{proposition}{Proposition}{Propositions}
\crefname{corollary}{Corollary}{Corollaries}
\crefname{assumption}{Assumption}{Assumptions}
\crefname{definition}{Definition}{Definitions}
\crefname{remark}{Remark}{Remarks}
\crefname{equation}{equation}{equations}
\crefname{section}{Section}{Sections}
\crefname{appendix}{Appendix}{Appendices}
\newcolumntype{Y}{>{\raggedright\arraybackslash}X}

\newcommand{\R}{\mathbb R}

\newcommand{\norm}[1]{\left\lVert #1\right\rVert}
\newcommand{\abs}[1]{\left\lvert #1\right\rvert}
\newcommand{\Span}{\operatorname{span}}
\newcommand{\rank}{\operatorname{rank}}

\newcommand{\diag}{\operatorname{diag}}
\newcommand{\Kry}{\mathcal K}
\newcommand{\Ip}{\mathcal I_p}
\newcommand{\Pp}{\mathscr Q_p}
\newcommand{\Om}{\Omega_p}
\newcommand{\eps}{\varepsilon}

\title{Global and Unconditional \texorpdfstring{$R$}{R}-Linear Convergence of Rank-Compressed Weighted LMSD Sweeps for Strictly Convex Quadratics\thanks{The work of Shutai Yang was supported by the National College Students Innovation and Entrepreneurship Training Program (Project No.~202510358091). The work of Ya-xiang Yuan was supported by the National Natural Science Foundation of China (Grant No.~12288201).}}

\headers{Convergence of Rank-Compressed Descent Sweeps}{S. Yang and Y.-X. Yuan}

\author{Shutai Yang\thanks{State Key Laboratory of Scientific and Engineering Computing, Academy of Mathematics and Systems Science, Chinese Academy of Sciences, and University of Chinese Academy of Sciences, Beijing, China, and School of Mathematical Sciences, University of Science and Technology of China, Hefei, China
  (\email{yangshutai26@mails.ucas.ac.cn}).}
  \and Ya-xiang Yuan\thanks{State Key Laboratory of Scientific and Engineering Computing, Academy of Mathematics and Systems Science, Chinese Academy of Sciences, Beijing, China
  (\email{yyx@lsec.cc.ac.cn}).}}

\begin{document}

\maketitle

\begin{abstract}
We study limited memory steepest descent (LMSD) with exact algebraic rank compression for strictly convex quadratic optimization. At each cycle, the method restricts the gradient history to its column space and applies the reciprocals of all Ritz values of the compressed projection in the next sweep. If the block-start gradient has at most $p$ active distinct eigenvalues, the delayed sweep terminates finitely. For nonterminating trajectories, a determinant and Cauchy--Binet formula for the complete-sweep polynomial yields global convergence without a full-rank history or a run-wise normalized-conditioning bound. Consecutive sweep endpoints define a continuous positively homogeneous map on a closed cone of compatible states. Compactness then gives an $R$-linear endpoint estimate, uniform over compatible initial states, with constants depending only on $H$ and $p$; the estimate extends to inner gradients, iterate errors, and objective gaps. Every fixed positive spectral weight $W=\omega(H)$ is linearly conjugate to the standard method. The weighted methods therefore share its decay factor, while the Euclidean-norm prefactor is increased by at most $\sqrt{\kappa_2(W)}$. This includes harmonic-Ritz LMSD, fixed power weights, and the delayed BB1 and BB2 recurrences.
\end{abstract}
\begin{keywords}
  limited memory steepest descent, Ritz values, Krylov rank loss, algebraic rank compression, spectral conjugacy, finite termination, positively homogeneous dynamics, $R$-linear convergence
\end{keywords}
\begin{AMS}
  65K05, 90C20, 65F10, 65F15
\end{AMS}

\section{Introduction}\label{sec:intro}

Consider
\begin{equation}\label{eq:intro-problem}
  \min_{x\in\R^n} f(x),
  \qquad
  f(x)=\tfrac12x^THx-b^Tx,
  \qquad H=H^T\succ0.
\end{equation}
For a gradient step $x^+=x-\alpha g$, the gradient satisfies
$g^+=(I-\alpha H)g$. Spectral gradient methods exploit this
polynomial action by selecting reciprocal stepsizes from spectral
information about $H$. The two classical Barzilai--Borwein stepsizes were introduced in \cite{BarzilaiBorwein1988}. For strictly convex quadratics, Raydan proved global convergence for the first BB choice in arbitrary dimension and observed that the same argument applies to the second \cite{Raydan1993}. Dai and Liao subsequently established an $R$-linear rate for the first choice and noted the analogous result for the second \cite{DaiLiao2002}. Friedlander et al.\ proved global convergence for a finite-delay family of retarded Rayleigh-quotient steps \cite{Friedlander1999}.  For a systematic spectral interpretation and numerical comparison of BB-type, LMSD, and related gradient steplengths, see \cite{DiSerafinoEtAl2018}; a broader survey is given in \cite{ZouMagoules2022}.

Fletcher's limited memory steepest descent method (LMSD) stores the
pre-step gradients generated in a sweep, extracts Ritz values from
their Krylov span, and applies the reciprocal values in the following
sweep, with the Ritz values ordered from largest to smallest
\cite[Sections~2--3]{Fletcher2012}.  In the appendix of that paper, Fletcher proved
that the basic fixed-length standard-Ritz sweep on a strictly convex quadratic either terminates or has gradients converging to zero; Section~7 introduced the corresponding
harmonic-Ritz variant.

Curtis and Guo subsequently established an $R$-linear rate for any
fixed history length for the standard method and, in their appendix,
for the harmonic variant \cite{CurtisGuo2018}.  Their analysis is
carried out under Assumption~3.4.  In particular, if $G_k=Q_kR_k$,
then $G_k$ is required to have full column rank and there must exist a
constant $\rho\ge1$, uniform in the cycle index $k$ along the run,
such that
\begin{equation}\label{eq:CG-assumption-intro}
  \norm{R_k^{-1}}
  \le \rho\,\norm{g_{k,1}}^{-1}.
\end{equation}
Accordingly, their rate constants depend on a run-wise normalized-conditioning bound.

Fletcher \cite[Section~3]{Fletcher2012} and, later, Curtis and Guo \cite[Remark~2.2]{CurtisGuo2018} noted that loss of rank or severe history ill-conditioning may require fewer stored gradients in a finite-precision implementation.  Di Serafino et al.\ form the projected matrix through a Cholesky factorization of $G^TG$ and discard the oldest stored gradient when that factorization loses numerical definiteness \cite[Section~2.2]{DiSerafinoEtAl2018}.  The analyses of Fletcher and of Curtis and Guo use fixed full-rank histories.  Gu and Du's MLMSD instead selects one safeguarded Ritz candidate from the largest full-rank suffix of the available history \cite{GuDu2021}.  Ferrandi and Hochstenbach study Cholesky, pivoted-QR, and SVD realizations, numerical rank truncation, harmonic variants, and nonlinear extensions \cite{FerrandiHochstenbach2025}.

Alignment-based and cyclic-reuse extensions of LMSD were explored by Zou and Magoul\`es \cite{ZouMagoules2019}. Ferrandi, Hochstenbach, and Kreji\'c developed a target-dependent harmonic framework for selecting BB-type spectral steplengths in gradient methods \cite{FerrandiHochstenbachKrejic2023}. Zhou and Gu classified several spectral constructions through determinant pencils and finite moment realizations and proposed a Hanoi-like method that selects and reuses one admissible candidate before recomputing the reduced model \cite{ZhouGu2026}. The present paper considers a different update rule: every value of the algebraically compressed pencil is applied once in the following sweep, including across changes in the algebraic history rank.

We analyze the exact-arithmetic standard-Ritz complete-sweep dynamics on the algebraic column space of each history. A determinant representation of the complete-sweep polynomial gives pointwise global convergence without the full-rank and normalized-conditioning assumptions in \eqref{eq:CG-assumption-intro}. A compatible two-endpoint formulation then yields an $R$-linear estimate uniform over compatible initial states. Fixed positive spectral weights are handled by the change of variables $W^{1/2}$, which preserves the Hessian and the extracted stepsizes.

The main results are as follows.
\begin{enumerate}[label=(\roman*),leftmargin=2.3em]

\item
We formulate standard-Ritz LMSD with exact algebraic rank compression. If a completed history starts from a gradient with at most $p$ active distinct eigenvalues, the compressed projection recovers the active spectrum and the following delayed sweep terminates finitely. This treats the nominal full-rank and rank-deficient termination strata within the same algorithmic rule.

\item
A three-dimensional example directly disproves the fixed-length Ritz
lower-bound implication used to obtain equation~(36) in Fletcher's appendix.
Thus the published proof is incomplete at that step as written unless an additional orbit-specific nondegeneracy argument is supplied. This motivates our use of complete-sweep multipliers rather than individual Ritz roots.

\item
For nonterminating standard sweeps, a determinant ratio and its Cauchy--Binet expansion represent the complete-sweep multiplier through fixed spectral templates with nonnegative state-dependent weights. This representation yields pointwise global convergence across changes in Krylov rank.

\item
Consecutive sweep endpoints define a continuous positively homogeneous self-map on a closed cone of compatible states, with zero continuation on the finite-termination strata. For fixed $H$ and $p$, compactness upgrades pointwise convergence to an $R$-linear endpoint estimate uniform over compatible initial states. The estimate extends to inner gradients, iterate errors, and objective gaps.

\item
Every fixed positive spectral weight $W=\omega(H)$ is linearly conjugate to the standard endpoint map. Hence all such weights admit the standard decay factor, while the Euclidean-norm prefactor is bounded by the standard prefactor times $\sqrt{\kappa_2(W)}$. This covers harmonic-Ritz LMSD, fixed power weights, and the delayed BB1 and BB2 recurrences.
\end{enumerate}

Numerical diagnostics compare fixed power weights on a repeated-spectrum quadratic and record the normalized history-conditioning quantity along standard and harmonic-Ritz trajectories.

Proposition~\ref{prop:finite-branch} identifies the finite-termination strata, while Proposition~\ref{prop:det-formula} supplies the determinant identity used in Theorem~\ref{thm:master} to prove pointwise global convergence. Continuity of the compatible endpoint map in Theorem~\ref{rl:thm:joint-continuity}, combined with the homogeneous compactness principle in Theorem~\ref{rl:thm:homogeneous-rate}, yields the state-uniform endpoint rate in Theorem~\ref{rl:thm:main-fixed}. For an actual run, the rate is initialized at $z_0=(g_{1,0},g_{2,0})$, the start/end pair of the first completed sweep after the warm-up; arbitrary positive finite warm-up steps enter through this initial state, whereas a bound relative to the original gradient would also depend on the warm-up stepsizes. Proposition~\ref{rl:prop:endpoint-conjugacy} and Corollaries~\ref{rl:cor:weighted-endpoint-rate} and~\ref{rl:cor:global-step-rate} transfer the same decay factor to every fixed positive spectral weight and give the corresponding bounds for inner gradients, iterate errors, and objective gaps.

\section{Rank-compressed LMSD sweeps and spectral conjugacy}\label{sec:setup}

\subsection{Active spectral reduction}

Translate the minimizer to the origin, so that
\begin{equation}\label{eq:quadratic}
  f(x)=\frac12x^THx,
  \qquad g=Hx,
  \qquad H=H^T\succ0.
\end{equation}
A gradient step along the negative-gradient direction satisfies
\begin{equation}\label{eq:grad-update}
  g^+=(I-\alpha H)g.
\end{equation}
Consequently every generated gradient is a polynomial in $H$ applied to the initial gradient.  Write the spectral decomposition by distinct eigenvalues as
\begin{equation}\label{eq:spectral-decomposition}
  H=\sum_{\ell=1}^{s}\mu_\ell P_\ell,
  \qquad 0<\mu_1<\cdots<\mu_s.
\end{equation}

\begin{lemma}\label{lem:active-reduction}
Set $u_\ell=P_\ell g_0$ in \eqref{eq:spectral-decomposition}.  For every generated gradient $g_j$ there is a scalar $c_{j,\ell}$ such that
\begin{equation}\label{eq:active-collinear}
  P_\ell g_j=c_{j,\ell}u_\ell.
\end{equation}
Hence the trajectory lies in
\[
  \mathcal A_0=\Span\{u_\ell:u_\ell\ne0\}.
\]
Using the normalized nonzero vectors $u_\ell$ as a basis of $\mathcal A_0$, the restriction of $H$ is diagonal and the eigenvalues of $H|_{\mathcal A_0}$ are distinct.
\end{lemma}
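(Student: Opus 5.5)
The plan is an induction on the step index $j$. By \eqref{eq:grad-update}, every generated gradient has the form $g_j=\pi_j(H)g_0$ for some real polynomial $\pi_j$. This holds whatever the stepsizes are, and in particular whether they come from Ritz values, compressed projections, or warm-up choices, since each step multiplies the current gradient by a factor $I-\alpha H$. Concretely, $\pi_0\equiv1$ and $\pi_{j+1}(t)=(1-\alpha_j t)\pi_j(t)$. The only property of the stepsizes used is that they are real scalars. I do not need them to be positive for this lemma.

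Next I would use the spectral projectors in \eqref{eq:spectral-decomposition}. They are mutually orthogonal and symmetric, satisfy $\sum_\ell P_\ell=I$, and obey $HP_\ell=P_\ell H=\mu_\ell P_\ell$. It follows that $P_\ell\,\pi(H)=\pi(\mu_\ell)P_\ell$ for every polynomial $\pi$. Therefore
\[
P_\ell g_j=\pi_j(\mu_\ell)P_\ell g_0=\pi_j(\mu_\ell)u_\ell,
\]
which is \eqref{eq:active-collinear} with $c_{j,\ell}=\pi_j(\mu_\ell)$. Summing over $\ell$ gives $g_j=\sum_\ell c_{j,\ell}u_\ell$. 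Terms with $u_\ell=0$ vanish, so $g_j\in\mathcal A_0$.

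For the last claim, the nonzero vectors $u_\ell$ lie in the mutually orthogonal ranges of the $P_\ell$. Hence they are pairwise orthogonal and in particular linearly independent, so the normalized vectors $\hat u_\ell=u_\ell/\norm{u_\ell}$ form an orthonormal basis of $\mathcal A_0$. Since $Hu_\ell=HP_\ell g_0=\mu_\ell u_\ell$, the subspace $\mathcal A_0$ is $H$-invariant. In this basis, $H|_{\mathcal A_0}=\diag(\mu_\ell: u_\ell\neq0)$. Its eigenvalues are a subset of the $\mu_\ell$ in \eqref{eq:spectral-decomposition}, which are strictly increasing and hence pairwise distinct.

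There is no real obstacle. The one point to state carefully is that "generated gradient" covers every step of the method, including inner sweep steps and any warm-up steps. All of these are steepest-descent steps of the form \eqref{eq:grad-update}, so the polynomial representation holds uniformly, independent of how the history rank compression selects the stepsizes.
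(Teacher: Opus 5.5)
Your proof is correct and follows the same route as the paper: write $g_j=p_j(H)g_0$ and use $P_\ell\,p_j(H)=p_j(\mu_\ell)P_\ell$, which gives $c_{j,\ell}=p_j(\mu_\ell)$. You also spell out details the paper leaves implicit: the pairwise orthogonality of the nonzero $u_\ell$, the $H$-invariance of $\mathcal A_0$, and the diagonal form with distinct entries of $H|_{\mathcal A_0}$.
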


\begin{proof}
If $g_j=p_j(H)g_0$, then
\[
  P_\ell g_j=P_\ell p_j(H)g_0=p_j(\mu_\ell)P_\ell g_0.
\]
This proves \eqref{eq:active-collinear} and the asserted active-coordinate reduction.
\end{proof}

We may therefore work in active spectral coordinates and assume
\begin{equation}\label{eq:simple-spectrum}
  H=\diag(\lambda_1,\ldots,\lambda_n),
  \qquad 0<\lambda_1<\cdots<\lambda_n,
\end{equation}
where $n$ is the number of active distinct eigenvalues. This is the standard active-coordinate reduction used in quadratic spectral-gradient and LMSD analyses; see, e.g., \cite[Section~2]{Fletcher2012} and \cite[Section~2]{DiSerafinoEtAl2018}.

\subsection{History blocks and the delayed LMSD sweep}

A block of $\ell$ gradient steps with positive finite stepsizes is written
\begin{equation}\label{eq:block-steps}
  g_{j+1}=(I-\alpha_jH)g_j,
  \qquad j=0,\ldots,\ell-1,
\end{equation}
and its pre-step history is
\begin{equation}\label{eq:G-generic}
  G=[g_0,\ldots,g_{\ell-1}].
\end{equation}

\begin{proposition}\label{prop:krylov}
For every block \eqref{eq:block-steps},
\begin{equation}\label{eq:span-krylov}
  \Span(G)=\Kry_\ell(H,g_0)
  :=\Span\{g_0,Hg_0,\ldots,H^{\ell-1}g_0\}.
\end{equation}
Thus the span depends only on the block-start gradient and the block length.
\end{proposition}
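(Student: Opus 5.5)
The plan is to prove the two inclusions $\Span(G)\subseteq\Kry_\ell(H,g_0)$ and $\Kry_\ell(H,g_0)\subseteq\Span(G)$ separately, using the polynomial representation of the iterates from \eqref{eq:block-steps}. Unrolling the recursion gives
\[
  g_j=q_j(H)g_0,\qquad q_j(t)=\prod_{i=0}^{j-1}(1-\alpha_i t),\qquad j=0,\ldots,\ell-1,
\]
with $q_0\equiv1$. Each $q_j$ has degree exactly $j$, because its leading coefficient $(-1)^j\prod_{i<j}\alpha_i$ is nonzero; this is where positivity and finiteness of the stepsizes enters. The forward inclusion is then immediate. Every $g_j$ with $j\le\ell-1$ is a polynomial of degree at most $\ell-1$ in $H$ applied to $g_0$, so it lies in $\Kry_\ell(H,g_0)$.

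For the reverse inclusion, I would argue at the level of polynomials, not vectors. The family $q_0,\ldots,q_{\ell-1}$ has degrees exactly $0,1,\ldots,\ell-1$. Its coefficient matrix in the monomial basis $1,t,\ldots,t^{\ell-1}$ is therefore triangular with nonzero diagonal, so the $q_j$ form a basis of the polynomials of degree at most $\ell-1$. Hence each monomial satisfies $t^k=\sum_{j\le k}c_{kj}q_j(t)$ for suitable scalars $c_{kj}$. Substituting $H$ for $t$ and applying the result to $g_0$ gives $H^kg_0=\sum_{j\le k}c_{kj}g_j\in\Span(G)$ for $k=0,\ldots,\ell-1$. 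This proves $\Kry_\ell(H,g_0)\subseteq\Span(G)$.

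The one point needing care is that the argument must not assume $G$ has full column rank, since the paper explicitly allows Krylov rank loss. The basis change is carried out in the polynomial space, and only afterwards is it pushed through the linear map $q\mapsto q(H)g_0$. Linear dependencies among the vectors $H^kg_0$ (for example, when $g_0$ has fewer than $\ell$ active eigenvalues) therefore do no harm: both spans are images of the same polynomial space under this map, so they coincide even when the map is not injective. The last sentence of the statement then follows because $\Kry_\ell(H,g_0)$ involves only $H$, $g_0$ and $\ell$, and not the stepsizes $\alpha_j$.
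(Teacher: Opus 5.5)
Your proof is correct and follows essentially the paper's argument. Both write $g_j=p_j(H)g_0$ with $\deg p_j=j$ and nonzero leading coefficient, so $G=[g_0,Hg_0,\ldots,H^{\ell-1}g_0]\,U$ for an upper triangular $U$ with nonzero diagonal; your polynomial-level phrasing just makes explicit why rank loss is harmless, which the paper's matrix identity also covers because $U$ is invertible whatever the rank of the Krylov matrix.
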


\begin{proof}
For each $j$,
\[
  g_j=p_j(H)g_0,
  \qquad
  p_j(t)=\prod_{h=0}^{j-1}(1-\alpha_ht).
\]
The polynomial $p_j$ has degree $j$ and nonzero leading coefficient.  Hence the change-of-basis matrix between $[g_0,Hg_0,\ldots,H^{\ell-1}g_0]$ and $G$ is triangular with nonzero diagonal.
\end{proof}

For the algorithmic indexing, write $x_{k,j}$ and $g_{k,j}=Hx_{k,j}$ for the $j$th pre-step state of block $k$.  If block $k$ has length $\ell_k$, then
\begin{equation}\label{eq:block-indexing}
  G_k=[g_{k,0},\ldots,g_{k,\ell_k-1}],
  \qquad
  x_{k,\ell_k}=x_{k+1,0},
  \qquad
  g_{k,\ell_k}=g_{k+1,0}.
\end{equation}
The warm-up block has $\ell_0=p$ unless it terminates early; block $k+1$ has length $\ell_{k+1}=d_k$.  Given a history $G_k$, let $Q_k$ have orthonormal columns spanning $\Span(G_k)$ and set
\begin{equation}\label{eq:dk}
  d_k=\rank(G_k)\le p.
\end{equation}
The standard method extracts the Ritz values
\begin{equation}\label{eq:standard-pencil}
  Q_k^THQ_kv=\theta v.
\end{equation}
They are positive and are used one block later as reciprocal stepsizes.  We list them in nonincreasing order,
\begin{equation}\label{eq:order}
  \theta_{k,1}\ge\cdots\ge\theta_{k,d_k}>0.
\end{equation}

This is the conventional LMSD ordering used by Fletcher \cite[Section~3]{Fletcher2012}.  The endpoint of a completed sweep is independent of this ordering, although the intermediate iterates are not.

\begin{algorithm}[!htbp]
\caption{Standard LMSD sweep with algebraic rank compression}
\label{alg:standard-sweep}
\begin{algorithmic}[1]
\Require $H\succ0$, an initial point, and nominal history length $p$.
\State Generate $p$ warm-up gradient steps with positive finite stepsizes, storing their pre-step gradients in $G_0$, unless $g=0$ is reached earlier.
\If{$g=0$} \State \Return the minimizer. \EndIf
\For{$k=0,1,2,\ldots$}
  \State Compute an orthonormal basis $Q_k$ of $\Span(G_k)$ and set $d_k=\rank(G_k)$.
  \State Solve \eqref{eq:standard-pencil} and order the values as in \eqref{eq:order}.
  \State Starting from the current gradient, apply the stepsizes $\theta_{k,1}^{-1},\ldots,\theta_{k,d_k}^{-1}$ until either termination or completion of the sweep, storing the pre-step gradients as $G_{k+1}$.
  \If{$g=0$} \State \Return the minimizer. \EndIf
\EndFor
\end{algorithmic}
\end{algorithm}
\FloatBarrier

Let $g_{k,0}$ denote the first gradient in history block $G_k$.  On any nonterminating tail for which $d_k=p$, the block-start recurrence is
\begin{equation}\label{eq:delayed-recurrence}
  g_{k+2,0}=\psi_k(H)g_{k+1,0},
  \qquad
  \psi_k(t)=\prod_{j=1}^{p}\left(1-\frac{t}{\theta_{k,j}}\right).
\end{equation}

\subsection{Fixed spectral weights and conjugacy}\label{sec:specializations}

A \emph{positive spectral weight} is a matrix of the form
\begin{equation}\label{eq:spectral-weight-definition}
  W=\sum_{\ell=1}^{s}w_\ell P_\ell=\omega(H),
  \qquad w_\ell>0.
\end{equation}
Thus $W\succ0$, $W$ commutes with $H$, and $WH\succ0$.  We write
\[
  \kappa_2(W):=\norm{W}_2\norm{W^{-1}}_2
\]
for its spectral condition number.  In the active coordinates of \eqref{eq:simple-spectrum}, the weight is diagonal.  The fixed-weight variant uses
\begin{equation}\label{eq:weighted-pencil}
  Q_k^TWHQ_kv=\theta\,Q_k^TWQ_kv.
\end{equation}

\begin{proposition}\label{prop:spectral-conjugacy}
Let $W=\omega(H)\succ0$, set $S=W^{1/2}$, and define
\[
  \widetilde x_j=Sx_j,
  \qquad
  \widetilde g_j=Sg_j,
  \qquad
  \widetilde G_k=SG_k.
\]
Then $\widetilde g_j=H\widetilde x_j$, and every gradient step is conjugated with the same stepsize:
\begin{equation}\label{eq:conjugate-gradient-update}
  \widetilde g_{j+1}=(I-\alpha_jH)\widetilde g_j.
\end{equation}
For every full-column-rank basis $B$ of a history space,
\begin{equation}\label{eq:conjugate-pencil}
  (SB)^TH(SB)=B^TWHB,
  \qquad
  (SB)^T(SB)=B^TWB.
\end{equation}
Consequently the generalized Ritz values of \eqref{eq:weighted-pencil} on $\Span(G_k)$ are exactly the standard Ritz values of $H$ on $\Span(\widetilde G_k)$.  Moreover,
\[
  \rank(\widetilde G_k)=\rank(G_k),
\]
and $g_j$ and $\widetilde g_j$ have the same active spectral support.  Hence the complete fixed-weight rank-compressed LMSD trajectory is the pullback under $S^{-1}$ of the standard rank-compressed trajectory starting from $\widetilde x_0=Sx_0$, with identical warm-up stepsizes, sweep lengths, extracted values, and finite-termination indices.
\end{proposition}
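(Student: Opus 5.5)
The argument rests on one fact: $S=W^{1/2}=\omega(H)^{1/2}$ is itself a positive spectral weight, $S=\sum_\ell w_\ell^{1/2}P_\ell$. Hence $S$ is symmetric positive definite and commutes with $H$. Everything else will be checked pointwise and then assembled by induction along the trajectory.

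\emph{Step 1 (single step).} First, $\widetilde g_j=Sg_j=SHx_j=HSx_j=H\widetilde x_j$. Applying $S$ to \eqref{eq:grad-update} and commuting $S$ past $I-\alpha_jH$ gives \eqref{eq:conjugate-gradient-update}, provided the stepsize $\alpha_j$ is the same in both runs.

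\emph{Step 2 (pencil identity).} For any basis $B$, we have $(SB)^TH(SB)=B^TSHSB=B^TS^2HB=B^TWHB$, and similarly $(SB)^T(SB)=B^TWB$, which is \eqref{eq:conjugate-pencil}. Because $S$ is invertible, $\Span(SG_k)=S\,\Span(G_k)$ and $\rank(SG_k)=\rank(G_k)$.

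Now take $B$ to be a full-column-rank basis of $\Span(G_k)$, for instance $B=Q_k$. Then $SB$ is a basis of $\Span(\widetilde G_k)$, and we write $SB=\widetilde Q\widetilde R$ with $\widetilde Q$ orthonormal and $\widetilde R$ invertible. The weighted pencil $(B^TWHB,\,B^TWB)$ becomes $\widetilde R^T(\widetilde Q^TH\widetilde Q)\widetilde R\,v=\theta\,\widetilde R^T\widetilde R\,v$. This reduces to the standard eigenproblem for $\widetilde Q^TH\widetilde Q$ in the variable $\widetilde Rv$. The generalized Ritz values therefore coincide with the standard Ritz values on $\Span(\widetilde G_k)$, counted with multiplicity. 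They are also independent of the chosen basis, since a change of basis $B\mapsto BT$ is a congruence of the pencil.

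\emph{Step 3 (support).} The identity $P_\ell\widetilde g_j=w_\ell^{1/2}P_\ell g_j$ holds with $w_\ell>0$. So $g_j$ and $\widetilde g_j$ have the same active support, and $g_j=0$ if and only if $\widetilde g_j=0$.

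\emph{Step 4 (induction).} We induct over the steps of Algorithm~\ref{alg:standard-sweep}. Both runs use identical warm-up stepsizes, so by Step~1 the relation $\widetilde g_j=Sg_j$ holds throughout warm-up. By Step~3 the two runs terminate at the same index, and the histories satisfy $\widetilde G_0=SG_0$. Suppose that $\widetilde G_k=SG_k$. Then by Step~2:
\begin{itemize}
\item the ranks agree, so $\widetilde d_k=d_k$;
\item the extracted values agree as multisets;
\item after ordering them as in \eqref{eq:order}, the stepsize sequences agree.
\end{itemize}
Step~1 then propagates $\widetilde g=Sg$ through the next sweep, and Step~3 gives identical termination. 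This yields $\widetilde G_{k+1}=SG_{k+1}$ and closes the induction. Pulling back under $S^{-1}$ gives the stated trajectory correspondence.

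The only delicate point is Step~2 in the rank-deficient case. There one must compare spaces rather than the raw histories $G_k$, because the pencil is defined on a compressed basis and must not depend on which basis is chosen. This is handled by the congruence invariance noted in Step~2.
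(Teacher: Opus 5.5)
Your proposal is correct and follows essentially the same route as the paper. Both arguments use the commutation of $S$ with $H$ and $S^2=W$ for the step and pencil identities. Both use basis invariance of the projected generalized eigenproblem, which you make explicit through $SB=\widetilde Q\widetilde R$. Invertibility of $S$ gives equal ranks, scalar action by $\sqrt{w_\ell}$ on each eigenspace gives equal support, and a block-by-block induction gives the trajectory correspondence; you simply spell out the details the paper leaves implicit.
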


\begin{proof}
Since $S$ and $H$ are functions of the same symmetric matrix, they commute.  Thus $\widetilde g_j=SHx_j=HSx_j=H\widetilde x_j$, and multiplying the gradient recurrence by $S$ gives \eqref{eq:conjugate-gradient-update}.  The same commutation relation and $S^2=W$ give \eqref{eq:conjugate-pencil}.  Generalized eigenvalues are invariant under a change of basis within a fixed subspace, so the two projected problems have the same values.  The matrix $S$ is invertible, which preserves history ranks; its restriction to every eigenspace of $H$ is multiplication by the positive scalar $\sqrt{w_\ell}$, which preserves active spectral support.  The asserted trajectory correspondence now follows block by block.
\end{proof}

Thus it is enough to prove the convergence theory for the standard method; Section~\ref{rl:sec:rates} transfers the resulting bounds back to every fixed positive spectral weight with an explicit norm-equivalence factor.

The weighted pencil \eqref{eq:weighted-pencil} includes the two principal LMSD variants:
\begin{equation}\label{eq:standard-harmonic-special}
  \begin{array}{rcll}
  W=I
  &:& Q_k^THQ_kv=\theta v,
  &\text{standard Ritz LMSD},\\[0.25em]
  W=H
  &:& Q_k^TH^2Q_kv=\theta Q_k^THQ_kv,
  &\text{harmonic-Ritz LMSD}.
  \end{array}
\end{equation}
The algorithm applies the reciprocal $\alpha=\theta^{-1}$ as the stepsize.  For a one-dimensional history, the reciprocal values in the two cases are the quadratic BB1 and BB2 stepsizes, respectively.  For $p>1$, these are the standard and harmonic LMSD pencils studied in \cite{Fletcher2012,CurtisGuo2018,FerrandiHochstenbach2025}.

More generally, because $H\succ0$,
\begin{equation}\label{eq:power-weight}
  W=H^a,\qquad a\in\R,
\end{equation}
is positive definite for every real exponent.  The projected problem becomes
\begin{equation}\label{eq:power-pencil}
  Q_k^TH^{a+1}Q_kv=\theta\,Q_k^TH^aQ_kv.
\end{equation}
Thus $a=0$ is standard LMSD and $a=1$ is harmonic-Ritz LMSD, while other values of $a$ define a power-weighted family.

\subsection{Support, Krylov rank, and finite termination}

For $u=(u_1,\ldots,u_n)^T$, define
\begin{equation}\label{eq:support-def}
  S(u):=\{i:u_i\ne0\},
  \qquad s(u):=|S(u)|.
\end{equation}

\begin{lemma}\label{lem:krylov-rank}
Under \eqref{eq:simple-spectrum},
\begin{equation}\label{eq:krylov-rank}
  \rank[u,Hu,\ldots,H^{p-1}u]=\min\{p,s(u)\}.
\end{equation}
\end{lemma}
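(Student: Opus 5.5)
The plan is to factor the Krylov matrix through a Vandermonde matrix restricted to the support of $u$. Write $s=s(u)$ and $S(u)=\{i_1,\ldots,i_s\}$. Under \eqref{eq:simple-spectrum}, the $(i,m)$ entry of $K:=[u,Hu,\ldots,H^{p-1}u]$ is $u_i\lambda_i^{m}$ for $m=0,\ldots,p-1$. Every row indexed by $i\notin S(u)$ vanishes, so deleting these rows leaves the rank unchanged. The remaining $s\times p$ matrix factors as
\[
  K_{S}=D\,V,
  \qquad
  D=\diag(u_{i_1},\ldots,u_{i_s}),
  \qquad
  V=\bigl[\lambda_{i_r}^{m}\bigr]_{r=1,\ldots,s;\;m=0,\ldots,p-1}.
\]
The matrix $D$ is invertible because its diagonal entries are nonzero. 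Hence $\rank K=\rank V$.

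It remains to show that the rectangular Vandermonde matrix $V$ has rank $\min\{p,s\}$. The nodes $\lambda_{i_1},\ldots,\lambda_{i_s}$ are distinct by \eqref{eq:simple-spectrum}, so the first $\min\{p,s\}$ columns and first $\min\{p,s\}$ rows of $V$ form a square Vandermonde matrix. Its determinant is $\prod_{r<r'}(\lambda_{i_{r'}}-\lambda_{i_r})$ over the chosen nodes, which is nonzero. This gives $\rank V\ge\min\{p,s\}$. The reverse inequality holds trivially, since $V$ is $s\times p$.

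An equivalent polynomial formulation is also available. A vector $c\in\ker K$ corresponds to a polynomial $q(t)=\sum_m c_m t^m$ of degree $<p$ with $q(\lambda_i)=0$ for all $i\in S(u)$. If $s\ge p$, such a $q$ has at least $p$ distinct roots, so $q=0$. If $s<p$, the kernel is exactly the set of multiples of $\prod_{i\in S(u)}(t-\lambda_i)$ of degree $<p$, which has dimension $p-s$. Either way, the rank is $\min\{p,s\}$.

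No step here is a genuine obstacle. The only point needing care is the row deletion together with the diagonal scaling, and it is this step that uses the distinctness of the active eigenvalues guaranteed by Lemma~\ref{lem:active-reduction}.
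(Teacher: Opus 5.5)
Your proof is correct and follows the paper's argument: delete the zero rows, factor out the diagonal of nonzero entries of $u$, and use that a rectangular Vandermonde matrix with distinct nodes has rank $\min\{p,s(u)\}$. One small correction to your closing remark: distinctness of the eigenvalues is used in the nonvanishing of the Vandermonde determinant, not in the row deletion or the diagonal scaling.
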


\begin{proof}
Delete the zero rows and factor out the nonzero components of $u$.  The remaining matrix is a rectangular Vandermonde matrix with distinct nodes $\lambda_i$, and its rank is $\min\{p,s(u)\}$.
\end{proof}

Lemma~\ref{lem:krylov-rank} separates three cases:
\[
s(u)>p,\qquad s(u)=p,\qquad s(u)<p.
\]
The first is the regular branch. In the second case the
history has rank $p$ but already spans the active invariant subspace;
in the third its rank is $s(u)<p$. Both latter cases lead to finite
termination by the next proposition. 

\begin{proposition}\label{prop:finite-branch}
Let
\[
  G=[g_0,\ldots,g_{p-1}]
\]
be a completed $p$-step history block with nonzero block-start gradient $g:=g_0$ and block endpoint $h:=g_p$.  Suppose that $d=s(g)\le p$ (hence $1\le d\le p$).  Then
\begin{equation}\label{eq:active-invariant}
  \Span(G)=\Kry_p(H,g)=\Span\{e_i:i\in S(g)\}.
\end{equation}
The eigenvalues of the standard projected matrix on $\Span(G)$ are precisely $\{\lambda_i:i\in S(g)\}$.  The effective standard sweep generated from this history and applied to $h$ annihilates $h$ in at most $d$ steps.
\end{proposition}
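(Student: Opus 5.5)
The plan is to work entirely in the active coordinates \eqref{eq:simple-spectrum} and to prove the three claims in sequence. The first step is the span identity \eqref{eq:active-invariant}. By Proposition~\ref{prop:krylov}, $\Span(G)=\Kry_p(H,g)$, and Lemma~\ref{lem:krylov-rank} gives $\rank[g,Hg,\ldots,H^{p-1}g]=\min\{p,d\}=d$. Every vector $H^jg$ is supported in $S(g)$, so $\Kry_p(H,g)\subseteq E:=\Span\{e_i:i\in S(g)\}$. Since $\dim E=d$, the rank count forces equality. The same argument also records that the history rank $d_k$ used by Algorithm~\ref{alg:standard-sweep} equals $d$.

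Next comes the spectrum of the projected matrix. Take $Q$ to be any orthonormal basis of $\Span(G)=E$. Because $E$ is spanned by coordinate vectors, it is invariant under the diagonal $H$, and $H|_E=\diag(\lambda_i)_{i\in S(g)}$. Hence $Q^THQ$ is orthogonally similar to this diagonal block. Its eigenvalues are therefore exactly $\{\lambda_i:i\in S(g)\}$, and they are distinct because of \eqref{eq:simple-spectrum}. I would note that this conclusion does not depend on which orthonormal basis is chosen, since Ritz values are basis-invariant.

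The last step is termination. The endpoint is $h=p_p(H)g$ with $p_p(t)=\prod_{j<p}(1-\alpha_jt)$, so $S(h)\subseteq S(g)$. The sweep applies the stepsizes $\lambda_i^{-1}$ for $i\in S(g)$, in nonincreasing order of the $\lambda_i$, producing successive gradients $\prod_{i\in T}(I-H/\lambda_i)h$ for the set $T$ of values used so far. After all $d$ steps the polynomial $\prod_{i\in S(g)}(1-t/\lambda_i)$ vanishes at every $\lambda_i$ in the support of $h$, so the gradient is $0$. If some intermediate gradient is already zero, the algorithm stops earlier. In either case $h$ is annihilated within at most $d$ steps, and the ordering of the steps is irrelevant.

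I expect the only delicate point to be bookkeeping: the "effective" sweep must be read as the rank-compressed one of length $d$ rather than $p$, and the case $h=0$, or early vanishing, must be handled by the stopping rule. The case $h=0$ is already covered, since the stopping rule returns before the sweep begins.
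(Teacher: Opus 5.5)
Your proposal is correct and follows essentially the same route as the paper's proof. You use the Krylov span identity with the rank count from Lemma~\ref{lem:krylov-rank} and the $d$-dimensional $H$-invariant coordinate subspace to get \eqref{eq:active-invariant}, identify the projected matrix with the restriction of $H$ to that subspace, and use $S(h)\subseteq S(g)$ to show that $\prod_{i\in S(g)}(I-\lambda_i^{-1}H)$ annihilates $h$ regardless of factor order, with early termination allowed.
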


\begin{proof}
Proposition~\ref{prop:krylov} gives $\Span(G)=\Kry_p(H,g)$.  By Lemma~\ref{lem:krylov-rank}, this space has dimension $d$.  The rightmost space in \eqref{eq:active-invariant} is a $d$-dimensional $H$-invariant subspace containing $\Kry_p(H,g)$, so \eqref{eq:active-invariant} holds.

On this active invariant subspace, the compressed standard projection is the restriction of $H$.  Therefore its eigenvalues are exactly $\{\lambda_i:i\in S(g)\}$.  The endpoint of the history block has the form
\[
  h=p_p(H)g,
  \qquad
  p_p(t)=\prod_{j=0}^{p-1}(1-\alpha_jt),
\]
so
\begin{equation}\label{eq:finite-support-inclusion}
  S(h)\subseteq S(g).
\end{equation}
Define
\[
  \zeta_g(t):=\prod_{i\in S(g)}\left(1-\frac{t}{\lambda_i}\right).
\]
For each spectral coordinate $r$, either $h_r=0$ or $r\in S(g)$, in which case $\zeta_g(\lambda_r)=0$.  Consequently
\[
  \zeta_g(H)h
  =\prod_{i\in S(g)}\left(I-\lambda_i^{-1}H\right)h
  =0.
\]
Thus the values extracted from the history starting at $g$ annihilate the endpoint $h$ to which the delayed sweep is applied.  The order of the $d$ factors is immaterial for the endpoint, and termination may occur before all $d$ factors have been used.
\end{proof}

\begin{corollary}\label{cor:weighted-finite-branch}
Proposition~\ref{prop:finite-branch} holds unchanged for every fixed positive spectral weight $W=\omega(H)$: the weighted compressed pencil recovers the same active eigenvalues and the effective delayed sweep terminates in at most $d$ steps.
\end{corollary}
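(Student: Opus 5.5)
The plan is to reduce to the standard case through Proposition~\ref{prop:spectral-conjugacy} and then invoke Proposition~\ref{prop:finite-branch} on the conjugated trajectory.

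Set $S=W^{1/2}$ and $\widetilde g_j=Sg_j$. By \eqref{eq:conjugate-gradient-update}, the transformed history $\widetilde G=SG$ is the pre-step history of a standard $p$-step block. It uses the same stepsizes, starts at $\widetilde g_0=Sg$, and ends at $\widetilde h=Sh$.

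First, the hypotheses transfer. Since $S$ is diagonal with positive entries $\sqrt{w_i}$ in the active coordinates \eqref{eq:simple-spectrum}, we have $S(\widetilde g_0)=S(g)$. Hence $s(\widetilde g_0)=d\le p$, and $\widetilde g_0\neq0$. Proposition~\ref{prop:finite-branch} then gives
\[
  \Span(\widetilde G)=\Span\{e_i:i\in S(g)\},
\]
and the standard Ritz values on this space are exactly $\{\lambda_i:i\in S(g)\}$.

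Second, the extracted values agree. Choose any full-column-rank basis $B$ of $\Span(G)$. Then $SB$ is a basis of $\Span(\widetilde G)$, and \eqref{eq:conjugate-pencil} shows that the weighted pencil \eqref{eq:weighted-pencil} on $\Span(G)$ has the same generalized eigenvalues as the standard projection on $\Span(\widetilde G)$. So the weighted compressed pencil also returns $\{\lambda_i:i\in S(g)\}$, with $d$ values counted with multiplicity. One can also check this directly: $\Span(G)=S^{-1}\Span(\widetilde G)=\Span\{e_i:i\in S(g)\}$ is $H$-invariant, and on it $W$ and $WH$ are diagonal. The pencil therefore reduces to $\diag(w_i\lambda_i)v=\theta\,\diag(w_i)v$, which gives $\theta=\lambda_i$ at once.

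Third, termination follows. The delayed weighted sweep applies the factors $I-\lambda_i^{-1}H$, $i\in S(g)$, to $h$. The standard delayed sweep applies the same factors to $\widetilde h=Sh$, and Proposition~\ref{prop:finite-branch} says it annihilates $\widetilde h$ within at most $d$ steps. Since $S$ commutes with these factors and is invertible, the weighted inner gradients are $S^{-1}$ times the standard ones. Thus the weighted sweep reaches zero at the same index, which is at most $d$.

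The only delicate point is the bookkeeping that the weighted and conjugated blocks have the same rank $d$ and the same termination index. This follows from the invertibility of $S$ and the statement $\rank(\widetilde G_k)=\rank(G_k)$ in Proposition~\ref{prop:spectral-conjugacy}.
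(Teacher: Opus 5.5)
Your proposal is correct and follows the paper's own route: the paper proves the corollary by applying Proposition~\ref{prop:spectral-conjugacy} and noting that $W^{1/2}$ preserves active support and history rank, so the weighted pencil has exactly the standard Ritz values of the transformed history, to which Proposition~\ref{prop:finite-branch} applies. Your direct check, that $\Span(G)$ is the coordinate subspace on which the weighted pencil is diagonal, is a valid self-contained alternative but is not needed.
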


\begin{proof}
Apply Proposition~\ref{prop:spectral-conjugacy}. The transformation by $W^{1/2}$ preserves active spectral support and history rank, while the weighted pencil has exactly the standard Ritz values of the transformed history.
\end{proof}

\begin{corollary}\label{cor:full-tail}
On every nonterminating standard run, each completed history has rank $p$ and
each block-start gradient satisfies
\[
s(g_{k,0})>p.
\]
Consequently all blocks after the warm-up have length $p$, and
\eqref{eq:delayed-recurrence} holds throughout the nonterminating
tail.
\end{corollary}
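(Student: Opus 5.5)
The plan is a contrapositive argument driven by Proposition~\ref{prop:finite-branch}: I will show that if any completed history fails the conclusion, then the run terminates. Suppose the run never terminates. I first look at the warm-up block $G_0$. By nontermination it is a completed $p$-step history with nonzero start $g_{0,0}$. If $s(g_{0,0})\le p$, Proposition~\ref{prop:finite-branch} applies directly: the sweep generated from $G_0$ and applied to the endpoint $g_{0,p}=g_{1,0}$ annihilates it in at most $d_0$ steps, which contradicts nontermination. Therefore $s(g_{0,0})>p$. Lemma~\ref{lem:krylov-rank} together with Proposition~\ref{prop:krylov} then gives $d_0=\rank(G_0)=\min\{p,s(g_{0,0})\}=p$. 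Hence block $1$ has length $\ell_1=d_0=p$.

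Next I run an induction on $k$. The hypothesis is that block $k$ has length $\ell_k=p$, that it is completed (by nontermination), and that its start $g_{k,0}$ is nonzero. Then $G_k$ is a completed $p$-step history in exactly the sense of Proposition~\ref{prop:finite-branch}, with endpoint $h=g_{k,p}=g_{k+1,0}$. If $s(g_{k,0})\le p$, the sweep of block $k+1$ uses the reciprocal Ritz values of $G_k$ applied to $h$. It therefore terminates within $d_k$ steps, which is a contradiction. So $s(g_{k,0})>p$, and again $d_k=\min\{p,s(g_{k,0})\}=p$. This gives $\ell_{k+1}=p$ and closes the induction.

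Since every block then has $d_k=p$ and is completed, block $k+1$ applies all $p$ factors $1-t/\theta_{k,j}$. Thus $g_{k+2,0}=g_{k+1,p}=\psi_k(H)g_{k+1,0}$, which is the recurrence \eqref{eq:delayed-recurrence} on the whole tail.

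The only subtle point is bookkeeping. Proposition~\ref{prop:finite-branch} requires a completed history of exactly $p$ steps. Block lengths are $d_{k-1}$, so I must establish $\ell_k=p$ before applying the proposition to $G_k$; this is why the induction carries the length claim along with the support claim. I also have to note that the support of $g_{k,0}$ is measured in the active coordinates of \eqref{eq:simple-spectrum}, so Lemma~\ref{lem:active-reduction} justifies using Lemma~\ref{lem:krylov-rank} there. Finally, "completed" must be read as: no zero gradient was reached inside the block, which is exactly what nontermination provides.
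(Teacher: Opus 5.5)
Your proof is correct and follows essentially the same route as the paper. Both arguments propagate the completed $p$-step block structure forward and use Proposition~\ref{prop:finite-branch} to rule out $s(g_{k,0})\le p$; the paper phrases this as a minimal counterexample on the first index with $d_k<p$ and then separately excludes $s(g_{k,0})=p$, whereas your forward induction treats both cases at once, which is only a cosmetic difference.
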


\begin{proof}
Suppose, to the contrary, that $k$ is the first index with
$d_k<p$. Since the run is nonterminating, $G_0$ is a completed
$p$-step warm-up history. If $k>0$, minimality gives
$d_{k-1}=p$, so $G_k$ was also generated by a completed $p$-step
sweep. Hence Proposition~\ref{prop:krylov} and
Lemma~\ref{lem:krylov-rank} give
\[
d_k=\min\{p,s(g_{k,0})\}<p.
\]
Proposition~\ref{prop:finite-branch} then terminates the following
sweep, a contradiction. Thus $d_k=p$ for every $k$.

If some block-start gradient satisfied $s(g_{k,0})=p$, the same
proposition would again terminate the following sweep. Therefore
$s(g_{k,0})>p$ on every nonterminating run.
\end{proof}

\begin{remark}\label{rem:p-vs-n}
If $p\ge n$ in the reduced problem, every nonzero block-start gradient has at most
$p$ active eigenvalues, so Proposition~\ref{prop:finite-branch}
implies finite termination. In particular, the total number of
gradient steps is at most $p+n$.
\end{remark}

Although every finite history on a nonterminating run has rank $p$,
normalized histories may approach lower-support boundary states. This
issue is examined in Section~\ref{sec:rankloss}.

\section{Fixed-length Ritz continuity at Krylov-rank loss}
\label{sec:rankloss}

At a rank-changing limit, convergence of normalized block-start gradients does not by itself control all Ritz values of a projected problem whose prescribed dimension exceeds the limiting Krylov rank. The following example isolates this difficulty for the lower bound used in Fletcher's equation~(36).

\begin{proposition}\label{prop:counterexample}
Let
\begin{equation}\label{eq:counter-H}
  H=\diag(1,2,3),
  \qquad p=2,
  \qquad
  q_\eps:=\frac{(\eps,\eps^2,1)^T}{\sqrt{1+\eps^2+\eps^4}},
  \qquad \eps>0.
\end{equation}
Let $\theta_{\eps,1}\ge\theta_{\eps,2}$ be the two standard Ritz values of $H$ on $\Kry_2(H,q_\eps)$.  Then
\begin{equation}\label{eq:counter-limits}
  q_\eps\to e_3,
  \qquad
  \Kry_2(H,q_\eps)\longrightarrow\Span\{e_1,e_3\},
  \qquad
  (\theta_{\eps,1},\theta_{\eps,2})\to(3,1).
\end{equation}
Here the subspace convergence is in the operator norm of the corresponding orthogonal projectors. Consequently, for all sufficiently small $\eps$,
\begin{equation}\label{eq:fletcher-bound-fails}
  \theta_{\eps,2}<2=\frac23\lambda_3.
\end{equation}
The same pair limit $(3,1)$ holds for the generalized Ritz values associated with every fixed positive spectral weight $W=\omega(H)$.
\end{proposition}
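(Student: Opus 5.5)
Since the normalization factor does not change the Krylov space, we work with the unnormalized vector $u_\eps=(\eps,\eps^2,1)^T$ and write $\Kry_2(H,u_\eps)=\Span\{u_\eps,Hu_\eps\}$. The convergence $q_\eps\to e_3$ is immediate. For the subspace limit, we look for a better basis. We have
\[
Hu_\eps-3u_\eps=(-2\eps,-\eps^2,0)^T .
\]
Dividing by $-2\eps$ gives $v_\eps=(1,\eps/2,0)^T$. Then $\{u_\eps,v_\eps\}$ is a basis of $\Kry_2(H,u_\eps)$, and $u_\eps\to e_3$, $v_\eps\to e_1$ as $\eps\to0$. These limits are orthonormal, so Gram--Schmidt applied to $(v_\eps,u_\eps)$ produces an orthonormal basis $Q_\eps$ with $Q_\eps\to[e_1,e_3]$. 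Hence the orthogonal projectors $Q_\eps Q_\eps^T$ converge in operator norm to the projector onto $\Span\{e_1,e_3\}$. This is the key point: the limiting Krylov rank of $e_3$ is $1$, yet the $2$-dimensional spaces converge to a specific plane.

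For the Ritz values, note that $Q_\eps^THQ_\eps\to[e_1,e_3]^TH[e_1,e_3]=\diag(1,3)$. Eigenvalues of symmetric $2\times2$ matrices depend continuously on the entries, so $(\theta_{\eps,1},\theta_{\eps,2})\to(3,1)$. Since $\theta_{\eps,2}\to 1<2$, the bound $\theta_{\eps,2}<2=\tfrac23\lambda_3$ holds for all sufficiently small $\eps$. If one prefers explicit formulas, both Ritz values are the roots of the monic quadratic $\pi$ orthogonal to $\Kry_1$ in the $u_\eps$-weighted discrete measure, namely $\sum_i u_i^2\pi(\lambda_i)\cdot\{1,\lambda_i\}=0$. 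This could serve as a cross-check, but the continuity argument suffices.

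For a weight $W=\omega(H)$, we invoke Proposition~\ref{prop:spectral-conjugacy}. The weighted generalized Ritz values on $\Kry_2(H,q_\eps)$ are the standard Ritz values on $S\Kry_2(H,q_\eps)=\Kry_2(H,Sq_\eps)$. With $S=\diag(s_1,s_2,s_3)$ and all $s_i>0$, we get
\[
Sq_\eps\propto(s_1\eps,s_2\eps^2,s_3)^T .
\]
We then repeat the same basis computation: $Hw-3w$ rescaled by $\eps$ tends to a multiple of $e_1$ because $s_1\ne0$, and $w\to e_3$. So the limiting subspace is again $\Span\{e_1,e_3\}$ and the limiting pair is again $(3,1)$.

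The only delicate step is justifying the subspace convergence despite the rank drop in the limit. The explicit rescaled basis $v_\eps$ handles this: it guarantees a uniformly well-conditioned basis, so the orthonormalization is continuous at $\eps=0$.
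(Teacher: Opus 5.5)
Your argument is correct and is essentially the paper's: both rescale $(H-3I)q_\eps$ to get a basis vector converging to $\pm e_1$, orthonormalize to obtain a convergent orthonormal basis of $\Kry_2(H,q_\eps)$, and pass to the limit in $Q_\eps^THQ_\eps$. The only difference is the weighted case. The paper takes the limit of the pencil $(Q_\eps^TWHQ_\eps,\,Q_\eps^TWQ_\eps)\to(\diag(3w_3,w_1),\,\diag(w_3,w_1))$ directly, whereas you transfer to the standard problem on $\Kry_2(H,Sq_\eps)$ via Proposition~\ref{prop:spectral-conjugacy} and repeat the computation; both routes work, and your $w\to e_3$ should read $w\to s_3e_3$, which is a harmless normalization slip.
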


\begin{proof}
Clearly $q_\eps\to e_3$.  Set
\[
  r_\eps:=\frac{(H-3I)q_\eps}{\norm{(H-3I)q_\eps}}
  =\frac{(-2,-\eps,0)^T}{\sqrt{4+\eps^2}}.
\]
Then $r_\eps\in\Kry_2(H,q_\eps)$ and $r_\eps\to-e_1$.  Moreover $q_\eps^Tr_\eps\to0$.  Orthogonalizing $r_\eps$ against $q_\eps$ therefore gives a unit vector $\widehat r_\eps\in\Kry_2(H,q_\eps)$ with $\widehat r_\eps\to-e_1$.  Hence
\[
  Q_\eps:=[q_\eps,\widehat r_\eps]\longrightarrow[e_3,-e_1]
\]
is a convergent orthonormal basis, which proves the subspace limit in \eqref{eq:counter-limits}.  It follows that
\[
  Q_\eps^THQ_\eps\longrightarrow\diag(3,1),
\]
so the ordered Ritz values tend to $(3,1)$ and \eqref{eq:fletcher-bound-fails} follows.

For a fixed positive spectral weight $W=\diag(w_1,w_2,w_3)$,
\[
  Q_\eps^TWHQ_\eps\longrightarrow\diag(3w_3,w_1),
  \qquad
  Q_\eps^TWQ_\eps\longrightarrow\diag(w_3,w_1).
\]
The limiting generalized eigenvalues are again $3$ and $1$.
Equivalently, the orthogonal projectors onto $\Kry_2(H,q_\eps)$ converge in operator norm to the projector onto $\Span\{e_1,e_3\}$.
\end{proof}

\paragraph{Relation to Fletcher's equation~(36)}
In Fletcher's appendix, let $r$ denote the spectral-component induction index (denoted $p$ there) and let $m$ denote the sweep length. Along an infinite subsequence, the first $r-1$ components of the normalized sweep-start gradients tend to zero, so every accumulation point is supported on $\{r,\ldots,n\}$. The proof then invokes continuity of eigenvalues to conclude that the $m$ prelimit Ritz values eventually satisfy
\[
  \theta_{k,\ell}\in(2\lambda_r/3,\lambda_n),
  \qquad \ell=1,\ldots,m,
\]
which is equation~(36) of \cite[Appendix]{Fletcher2012}.

The difficulty is that the limiting $m$-step Krylov problem may have rank below $m$.  Proposition~\ref{prop:counterexample}, with $r=3$ and $m=2$, gives a path for which the normalized block-start gradient tends to $e_3$, whereas the smaller prelimit Ritz value tends to $1<2=(2/3)\lambda_3$.  Hence the normalized-gradient convergence and finite-iteration full-rank information used at this point in the appendix do not by themselves imply equation~(36).  Recovering that conclusion would require an additional orbit-specific constant-rank or quantitative nondegeneracy argument that excludes such rank-changing limiting paths.

Our convergence proof avoids the issue by working with the complete-sweep multiplier rather than an ordered Ritz tuple. The next section derives determinant and Cauchy--Binet representations that remain suitable at rank-changing limits.

\section{Determinant representation of a complete sweep}\label{sec:mainproof}

For each completed block,
\[
  g_{k+1,0}=q_k(H)g_{k,0}
\]
for some polynomial $q_k$, and hence
\[
  S(g_{k+1,0})\subseteq S(g_{k,0}).
\]
The support therefore stabilizes after finitely many blocks. On a
nonterminating run its cardinality exceeds $p$ by
Corollary~\ref{cor:full-tail}. Restricting to the resulting invariant
subspace and relabeling the eigenvalues, we may analyze the tail under
\begin{equation}\label{eq:regular-standing}
  n>p,\qquad
  g_{k,0}^{(i)}\ne0
  \quad(i=1,\ldots,n),\qquad k\ge k_0.
\end{equation}

\subsection{Spectral enclosure}

\begin{lemma}\label{lem:enclosure}
Let $Q\in\R^{n\times d}$ have orthonormal columns.  Then every eigenvalue of the standard projected matrix $Q^THQ$ lies in
\begin{equation}\label{eq:enclosure}
  [\lambda_1,\lambda_n].
\end{equation}
\end{lemma}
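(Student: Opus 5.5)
The plan is the Rayleigh-quotient argument. Let $\theta$ be an eigenvalue of $Q^THQ$ with unit eigenvector $v\in\R^d$, so $Q^THQv=\theta v$ and $v^Tv=1$. Multiplying on the left by $v^T$ gives
\[
  \theta=v^TQ^THQv=y^THy,
  \qquad y:=Qv .
\]
Since $Q$ has orthonormal columns, $y^Ty=v^TQ^TQv=v^Tv=1$. Hence $\theta$ is the Rayleigh quotient of $H$ at the unit vector $y$.

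Next I bound this quotient in the active spectral coordinates \eqref{eq:simple-spectrum}, where $H=\diag(\lambda_1,\ldots,\lambda_n)$. There
\[
  y^THy=\sum_{i=1}^n\lambda_i y_i^2,
  \qquad \sum_{i=1}^n y_i^2=1,
\]
so $\theta$ is a convex combination of the $\lambda_i$ and therefore lies in $[\lambda_1,\lambda_n]$. No diagonal assumption is really needed: for general symmetric $H$ one applies the same bound in an orthonormal eigenbasis, giving $\lambda_{\min}(H)\le\theta\le\lambda_{\max}(H)$.

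The only points needing care are that the eigenvalues of the symmetric matrix $Q^THQ$ are real with real eigenvectors, so the quotient makes sense, and that the statement is made in the reduced coordinates in force at this point of the paper. I do not expect a genuine obstacle. A remark worth recording afterwards is that, since $H\succ0$, every Ritz value is therefore positive and bounded below by $\lambda_1$, so every reciprocal stepsize $\theta^{-1}$ lies in $[\lambda_n^{-1},\lambda_1^{-1}]$.
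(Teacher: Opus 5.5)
Your proof is correct and is essentially the paper's argument. The paper simply invokes the min--max principle for the restriction of $H$ to $\Span(Q)$, and your identity $\theta=y^THy$ with $y=Qv$, $\norm{y}=1$, is exactly the extremal (Rayleigh-quotient) case of that principle, written out explicitly.
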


\begin{proof}
This follows directly from the min--max principle applied to the restriction of $H$ to $\Span(Q)$.
\end{proof}

\subsection{The complete-sweep multiplier}

On a regular block, write $g_k:=g_{k,0}$ and define
\begin{equation}\label{eq:K-DV}
  K_k=[g_k,Hg_k,\ldots,H^{p-1}g_k]=D_kV,
\end{equation}
where
\begin{equation}\label{eq:DV}
  D_k=\diag(g_k^{(1)},\ldots,g_k^{(n)}),
  \qquad V_{ij}=\lambda_i^{j-1}.
\end{equation}
Let $\Ip$ denote the family of all $p$-element subsets $I\subset\{1,\ldots,n\}$.  For $I\in\Ip$, let $V_I$ be the corresponding $p\times p$ row submatrix of $V$ and set $\Delta_I=\det(V_I)$; under \eqref{eq:simple-spectrum} every $\Delta_I$ is a Vandermonde determinant with distinct nodes and is therefore nonzero.

\begin{proposition}\label{prop:det-formula}
For every $t\in\R$, the complete-sweep multiplier satisfies
\begin{equation}\label{eq:det-ratio}
  \psi_k(t)
  =\frac{\det\!\left(K_k^T(H-tI)K_k\right)}
         {\det\!\left(K_k^THK_k\right)}.
\end{equation}
For a spectral index $r$, define
\begin{equation}\label{eq:beta-phi}
  \beta_I^{(k)}
  =\Delta_I^2\prod_{i\in I}\lambda_i\bigl(g_k^{(i)}\bigr)^2>0,
  \qquad
  \phi_I^{(r)}
  =\prod_{i\in I}\left(1-\frac{\lambda_r}{\lambda_i}\right),
\end{equation}
and
\begin{equation}\label{eq:pi-def}
  \pi_I^{(k)}
  =\frac{\beta_I^{(k)}}{\sum_{J\in\Ip}\beta_J^{(k)}}.
\end{equation}
Then
\begin{equation}\label{eq:central-average}
  \psi_k(\lambda_r)
  =\sum_{I\in\Ip}\pi_I^{(k)}\phi_I^{(r)},
  \qquad
  \pi_I^{(k)}>0,
  \qquad
  \sum_{I\in\Ip}\pi_I^{(k)}=1.
\end{equation}
Thus $\psi_k(\lambda_r)$ is a convex combination of the fixed real numbers $\phi_I^{(r)}$.
\end{proposition}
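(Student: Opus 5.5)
The plan has two parts. First I prove the determinant ratio \eqref{eq:det-ratio}. Then I expand both determinants by Cauchy--Binet and evaluate at $t=\lambda_r$.

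\emph{Step 1: the determinant ratio.} On a regular block (standing hypothesis \eqref{eq:regular-standing}), Lemma~\ref{lem:krylov-rank} gives that $K_k$ has full column rank $p$. Proposition~\ref{prop:krylov} gives $\Span(G_k)=\Span(K_k)$. Write $K_k=Q_kR$ with $R$ invertible, so that
\[
K_k^T(H-tI)K_k=R^T\bigl(Q_k^THQ_k-tI\bigr)R.
\]
Hence
\[
\det\bigl(K_k^T(H-tI)K_k\bigr)=\det(R)^2\prod_{j=1}^p(\theta_{k,j}-t).
\]
At $t=0$ this reads $\det(K_k^THK_k)=\det(R)^2\prod_j\theta_{k,j}$, which is positive since $H\succ0$ and $K_k$ has full rank. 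Dividing the two identities gives $\prod_j(1-t/\theta_{k,j})=\psi_k(t)$, which is \eqref{eq:det-ratio}. The ratio does not depend on the choice of basis, so $R$ drops out.

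\emph{Step 2: Cauchy--Binet at $t=\lambda_r$.} Using $K_k=D_kV$, we have
\[
K_k^T(H-tI)K_k=V^T D_k(H-tI)D_kV=V^T\Lambda_tV,
\qquad
\Lambda_t=\diag\bigl((\lambda_i-t)(g_k^{(i)})^2\bigr).
\]
Cauchy--Binet for $V^T\Lambda_tV$, with $\Lambda_t$ diagonal, gives
\[
\det\bigl(V^T\Lambda_tV\bigr)=\sum_{I\in\Ip}\Delta_I^2\prod_{i\in I}(\lambda_i-t)\bigl(g_k^{(i)}\bigr)^2.
\]
Now factor $\prod_{i\in I}(\lambda_i-t)=\prod_{i\in I}\lambda_i\cdot\prod_{i\in I}(1-t/\lambda_i)$. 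Each term then becomes $\beta_I^{(k)}\prod_{i\in I}(1-t/\lambda_i)$. At $t=0$ the same expansion gives the denominator $\sum_J\beta_J^{(k)}$. Dividing and setting $t=\lambda_r$ yields
\[
\psi_k(\lambda_r)=\sum_{I\in\Ip}\pi_I^{(k)}\phi_I^{(r)},
\]
which is \eqref{eq:central-average}. In fact this holds for every $t$, with $\phi_I$ replaced by $\prod_{i\in I}(1-t/\lambda_i)$.

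\emph{Step 3: positivity of the weights.} We have $\beta_I^{(k)}>0$ because $\Delta_I\neq0$ (the nodes $\lambda_i$ are distinct), $\lambda_i>0$, and $g_k^{(i)}\ne0$ by \eqref{eq:regular-standing}. Hence $\pi_I^{(k)}>0$, and the $\pi_I^{(k)}$ sum to $1$ by construction.

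The only delicate point is Step 1. One must confirm that the Ritz values used in $\psi_k$ are those of the compressed projection on $\Span(K_k)$, with $d_k=p$. This follows from Corollary~\ref{cor:full-tail} on the nonterminating tail. Beyond that, the argument is routine bookkeeping with Cauchy--Binet.
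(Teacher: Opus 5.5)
Your proof is correct and follows the paper's argument. Where the paper uses $\det(K_k^THK_k-\lambda K_k^TK_k)=\det(K_k^TK_k)\prod_j(\theta_{k,j}-\lambda)$, you write $K_k=Q_kR$, which is equivalent; you then evaluate at $t$ and at $0$, and apply Cauchy--Binet to $V^TD_k(H-tI)D_kV$ exactly as the paper does, and your remark that the expansion holds for every $t$ with templates $\prod_{i\in I}(1-t/\lambda_i)$ is a correct mild generalization.
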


\begin{proof}
In the regular regime, $K_k$ has full column rank. Since
$H\succ0$, the matrix
$K_k^THK_k$ is positive definite, so the denominator in
\eqref{eq:det-ratio} is nonzero. The generalized eigenvalues are
unchanged when the orthonormal basis $Q_k$ is replaced by the
full-rank basis $K_k$ of the same subspace.  Hence
\[
  \det\!\left(K_k^THK_k-\lambda K_k^TK_k\right)
  =\det(K_k^TK_k)\prod_{j=1}^{p}(\theta_{k,j}-\lambda).
\]
Evaluating at $\lambda=t$ and at $\lambda=0$ and dividing gives \eqref{eq:det-ratio}, because $\prod_j(\theta_{k,j}-t)/\prod_j\theta_{k,j}=\psi_k(t)$.  Specializing to $t=\lambda_r$, substituting $K_k=D_kV$, and applying Cauchy--Binet to the denominator gives
\begin{equation}\label{eq:CB-den}
  \det(K_k^THK_k)=\sum_{I\in\Ip}\beta_I^{(k)}.
\end{equation}
The same algebraic Cauchy--Binet expansion for the numerator gives
\begin{equation}\label{eq:CB-num}
  \det\!\left(K_k^T(H-\lambda_rI)K_k\right)
  =\sum_{I\in\Ip}\beta_I^{(k)}\phi_I^{(r)}.
\end{equation}
Dividing \eqref{eq:CB-num} by \eqref{eq:CB-den} yields \eqref{eq:central-average}.
\end{proof}

For completeness, we record the tail form of the scalar lemma used in Fletcher's appendix \cite{Fletcher2012}.

\begin{lemma}\label{lem:two-step}
Let $k_0\ge0$ and let $\{a_k\}_{k\ge k_0}$ be a nonnegative sequence.  Suppose there are constants $M\ge1$ and $c\in(0,1)$ such that
\begin{equation}\label{eq:M-bound}
  a_{k+2}\le Ma_{k+1}
  \qquad(k\ge k_0).
\end{equation}
Assume further that for every $\eps>0$ there is $k_\eps\ge k_0$ such that
\begin{equation}\label{eq:trigger}
  k\ge k_\eps,
  \quad a_k\ge\eps
  \quad\Longrightarrow\quad
  a_{k+2}\le c a_{k+1}.
\end{equation}
Then $a_k\to0$.
\end{lemma}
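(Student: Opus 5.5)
We argue by contradiction, using only \eqref{eq:M-bound} and \eqref{eq:trigger}. First note that $L:=\limsup_{k\to\infty}a_k<\infty$. To see this, apply \eqref{eq:trigger} with, say, $\eps=1$: for $k\ge k_1$, either $a_k<1$, in which case $a_{k+2}\le Ma_{k+1}$, or $a_k\ge1$, in which case $a_{k+2}\le ca_{k+1}\le a_{k+1}$. Hence every term is bounded by the maximum of its predecessor and $M\max(\text{previous},1)$.

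This crude bound is not enough, so the finiteness of $L$ needs a cleaner argument. Put $b_k:=\max\{a_k,a_{k+1}\}$. If $a_k\ge\eps$, then $a_{k+2}\le ca_{k+1}<b_k$. If $a_k<\eps$, then $a_{k+2}\le Ma_{k+1}$. Consider a long run of indices with $a_k\ge\eps$. Along such a run the sequence decreases geometrically with ratio $c$, so the run cannot last forever, and some $a_k<\eps$ must occur infinitely often.

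Next, bound the excursions. After an index with $a_k<\eps$, we have $a_{k+2}\le Ma_{k+1}$. The term $a_{k+1}$ is itself controlled: either $a_{k-1}\ge\eps$, which gives $a_{k+1}\le ca_k<\eps$, or $a_{k-1}<\eps$, which gives $a_{k+1}\le Ma_k<M\eps$. So $a_{k+1}<M\eps$ in either case, and $a_{k+2}<M^2\eps$. From then on, any term that is $\ge\eps$ forces the term two ahead to be at most $c$ times the term one ahead. Inductively, while the sequence stays above $\eps$ it cannot grow: $a_{j+2}\le ca_{j+1}$. A new increase by the factor $M$ can occur only right after a term below $\eps$, and the next term is then again $<M\eps$.

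So for $k$ large, every term is at most $M^2\eps$. More precisely, by induction, every $j\ge k+1$ satisfies $a_j\le M^2\eps$. At a term with $a_{j-2}<\eps$ we get $a_j\le Ma_{j-1}$ with $a_{j-1}<M\eps$. At a term with $a_{j-2}\ge\eps$ we get $a_j\le ca_{j-1}\le a_{j-1}$. The second case keeps the bound because it is non-increasing, and the first case keeps it because of the bound on $a_{j-1}$.

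The main obstacle is this last bound, since $a_{j-1}$ is only known to be $\le M^2\eps$ in the second case. I will therefore track the pair $(a_{j-1},a_j)$ and use the invariant "$a_{j-1}\le M\eps$ or $a_j\le a_{j-1}$", giving $\limsup a_j\le M^2\eps$. Since $\eps$ is arbitrary, $a_k\to0$.
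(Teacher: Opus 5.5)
Your argument is correct and is essentially the paper's proof. You first show that terms below $\eps$ occur at arbitrarily late indices, since otherwise the trigger forces geometric decay. You then run the same induction, with the case split $a_{j-2}<\eps$ versus $a_{j-2}\ge\eps$, to get $a_j\le M^2\eps$ for all later $j$. The opening claim that $\limsup a_k<\infty$, the announced contradiction, and the ``obstacle'' in your last paragraph are unnecessary: in the second case the induction hypothesis $a_{j-1}\le M^2\eps$ already gives $a_j\le a_{j-1}\le M^2\eps$. So your fourth-paragraph induction closes as written, provided you start it at an index $k\ge\max\{k_\eps,k_0+1\}$ with $a_k<\eps$.
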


\begin{proof}
Fix $\delta>0$.  We first show that there exists $N\ge k_\delta+1$ such that $a_N<\delta$.  Otherwise $a_k\ge\delta$ for every $k\ge k_\delta+1$, and
\eqref{eq:trigger} would give
\[
  a_{k+2}\le c a_{k+1}
  \qquad(k\ge k_\delta+1).
\]
Equivalently, $a_{j+1}\le c a_j$ for every sufficiently large $j$, which forces $a_j\to0$ and contradicts $a_j\ge\delta$.

Fix such an $N$.  We claim that
\[
  a_j<M^2\delta
  \qquad\text{for every }j\ge N.
\]
For $j=N$ this follows from $a_N<\delta$ and $M\ge1$.  Moreover, since $N-1\ge k_0$, \eqref{eq:M-bound} gives
\[
  a_{N+1}\le Ma_N<M\delta\le M^2\delta.
\]

Assume inductively that the claim holds through index $m$, where $m\ge N+1$.  If $a_{m-1}<\delta$, then two applications of
\eqref{eq:M-bound} give
\[
  a_{m+1}\le Ma_m\le M^2a_{m-1}<M^2\delta.
\]
If $a_{m-1}\ge\delta$, then $m-1\ge k_\delta$, and
\eqref{eq:trigger} gives
\[
  a_{m+1}\le c a_m<a_m<M^2\delta.
\]
The induction is complete.  Hence
\[
  \limsup_{k\to\infty}a_k\le M^2\delta.
\]
Letting $\delta\downarrow0$ proves that $a_k\to0$.
\end{proof}

We now combine Proposition~\ref{prop:det-formula} and Lemma~\ref{lem:two-step} in a spectral induction.

\section{Global convergence}\label{sec:global}

\begin{theorem}\label{thm:master}
Consider Algorithm~\ref{alg:standard-sweep} for \eqref{eq:quadratic} with algebraic rank compression.  The method either reaches $g=0$ in finitely many steps or, if it does not terminate,
\begin{equation}\label{eq:within-conv}
  g_{k,0}\longrightarrow0,
  \qquad
  \max_{0\le j\le p}\norm{g_{k,j}}\longrightarrow0,
  \qquad
  \max_{0\le j\le p}\norm{x_{k,j}}\longrightarrow0.
\end{equation}
In the original variables of \eqref{eq:intro-problem}, before the translation and active reduction of Section~\ref{sec:setup}, the conclusion states that every iterate converges to the unique minimizer of $f$.
\end{theorem}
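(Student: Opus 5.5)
The plan is to handle finite termination first, then reduce to the regular tail, and then prove $g_{k,0}\to0$ by induction on the spectral index $r$, using the convex-combination formula of Proposition~\ref{prop:det-formula} together with Lemma~\ref{lem:two-step}. If the run terminates there is nothing to prove. Otherwise, by Corollary~\ref{cor:full-tail} and the support-stabilization argument at the start of Section~\ref{sec:mainproof}, there is a $k_0$ after which the standing assumption \eqref{eq:regular-standing} holds. The recurrence \eqref{eq:delayed-recurrence} then reads, componentwise,
\[
  g_{k+2,0}^{(r)}=\psi_k(\lambda_r)\,g_{k+1,0}^{(r)},
\]
where $\psi_k$ is built from the history starting at $g_k:=g_{k,0}$. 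Set $a_k^{(r)}:=\abs{g_k^{(r)}}$ and
\[
  M:=\max\Bigl\{1,\max_{r,I}\abs{\phi_I^{(r)}}\Bigr\}.
\]
By \eqref{eq:central-average}, $\abs{\psi_k(\lambda_r)}\le M$, which is hypothesis \eqref{eq:M-bound} for every $r$.

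\emph{Base case $r=1$.} Every $\phi_I^{(1)}=\prod_{i\in I}(1-\lambda_1/\lambda_i)$ lies in $[0,1)$, being $0$ if $1\in I$. Hence $0\le\psi_k(\lambda_1)\le c_1:=\max_I\phi_I^{(1)}<1$, and $a_k^{(1)}\to0$ geometrically.

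\emph{Inductive step.} Assume $g_k^{(i)}\to0$ for all $i<r$, fix $\eps>0$, and suppose $a_k^{(r)}\ge\eps$. Split $\Ip$ into three classes:
\begin{enumerate}[label=(\alph*),leftmargin=2.3em]
\item sets containing $r$, for which $\phi_I^{(r)}=0$;
\item sets $I\subset\{r+1,\dots,n\}$, for which each factor $1-\lambda_r/\lambda_i$ lies in $(0,1)$, so that $0<\phi_I^{(r)}\le c_r:=(1-\lambda_r/\lambda_n)^p<1$;
\item "bad" sets that omit $r$ but contain some $i<r$.
\end{enumerate}
For a bad set $I$ with such an $i$, let $I'=(I\setminus\{i\})\cup\{r\}\in\Ip$. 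By \eqref{eq:beta-phi},
\[
  \frac{\beta_I^{(k)}}{\beta_{I'}^{(k)}}
  =\frac{\Delta_I^2}{\Delta_{I'}^2}\,\frac{\lambda_i}{\lambda_r}\,
   \frac{\bigl(g_k^{(i)}\bigr)^2}{\bigl(g_k^{(r)}\bigr)^2}
  \le C\,\frac{\max_{i<r}\bigl(g_k^{(i)}\bigr)^2}{\eps^2},
\]
where $C$ depends only on $H$ and $p$. Since $\beta_{I'}^{(k)}$ is at most the full denominator in \eqref{eq:pi-def}, each bad weight satisfies $\pi_I^{(k)}\le C\max_{i<r}(g_k^{(i)})^2/\eps^2$. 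By the induction hypothesis this tends to zero. Therefore, for $k\ge k_\eps$,
\[
  \abs{\psi_k(\lambda_r)}\le c_r+M\abs{\Ip}\max_{I\ \mathrm{bad}}\pi_I^{(k)}\le\tfrac{1+c_r}{2},
\]
which is the trigger \eqref{eq:trigger} with $c=(1+c_r)/2$. Lemma~\ref{lem:two-step} then gives $a_k^{(r)}\to0$, and induction over $r=1,\dots,n$ yields $g_{k,0}\to0$.

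\emph{Inner gradients, iterates, and original variables.} For the inner gradients, $g_{k,j}=\prod_{h<j}(I-\theta_{k-1,h}^{-1}H)\,g_{k,0}$, and Lemma~\ref{lem:enclosure} places every Ritz value in $[\lambda_1,\lambda_n]$. Each factor therefore has norm at most $\lambda_n/\lambda_1$, giving
\[
  \max_{0\le j\le p}\norm{g_{k,j}}\le(\lambda_n/\lambda_1)^p\,\norm{g_{k,0}}\to0.
\]
The iterates satisfy $x_{k,j}=H^{-1}g_{k,j}$, so they converge to zero as well. Undoing the translation and the active reduction of Lemma~\ref{lem:active-reduction} shows that the iterates converge to the minimizer: the inactive components are identically zero. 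The finitely many warm-up and pre-$k_0$ blocks do not affect the limit.

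The main obstacle is the domination step in the inductive argument. It must compare the weights of bad index sets with weights that carry the factor $(g_k^{(r)})^2\ge\eps^2$ without any full-rank or conditioning assumption, and it must not involve the possibly degenerate intermediate components $g_k^{(i)}$ with $i>r$. The single-index swap $I\mapsto I'$ handles this because the components $g_k^{(i)}$ with $i>r$ appear identically in $\beta_I^{(k)}$ and $\beta_{I'}^{(k)}$ and cancel in the ratio.
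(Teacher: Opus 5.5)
Your proposal is correct and follows the paper's proof essentially step for step. You reduce to the regular tail via Corollary~\ref{cor:full-tail}, induct on the spectral index using the three-way partition of $\Ip$ and the single-index swap $I\mapsto(I\setminus\{i\})\cup\{r\}$ to make the bad weights vanish, conclude with Lemma~\ref{lem:two-step}, and finish with a $p$-fold factor bound for the inner gradients. The only differences are cosmetic and do not affect validity: you bound $\abs{\psi_k(\lambda_r)}$ by the largest template $\abs{\phi_I^{(r)}}$ instead of by $B^p$ from Lemma~\ref{lem:enclosure}, you use a uniform threshold $k_\eps$ in place of the paper's $\limsup$ along $S_\eps$, and your factor bound $\lambda_n/\lambda_1$ is slightly looser.
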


\begin{proof}
Assume that the method does not terminate. By
Corollary~\ref{cor:full-tail}, every post-warm-up block has length
$p$ and every block-start gradient has more than $p$ active
components. We therefore work on the regular tail
\eqref{eq:regular-standing}.

Set
\[
  B:=\max\left\{1,\frac{\lambda_n}{\lambda_1}-1\right\}.
\]
By Lemma~\ref{lem:enclosure}, every extracted value lies in $[\lambda_1,\lambda_n]$.  Therefore, for every $r$ and $k$,
\begin{equation}\label{eq:rough-bound}
  \abs{\psi_k(\lambda_r)}\le B^p.
\end{equation}

We prove componentwise convergence by induction on $r$.  For $r=1$, every set $I$ containing $1$ has $\phi_I^{(1)}=0$.  If $1\notin I$, then
\[
  0<\phi_I^{(1)}
  \le\left(1-\frac{\lambda_1}{\lambda_n}\right)^p
  =:\rho_1<1.
\]
The convex-weight identity \eqref{eq:central-average} gives
\begin{equation}\label{eq:first-contract}
  \abs{\psi_k(\lambda_1)}\le\rho_1,
\end{equation}
so \eqref{eq:delayed-recurrence} implies $g_{k,0}^{(1)}\to0$.

Let $r\ge2$ and assume $g_{k,0}^{(i)}\to0$ for every $i<r$.  We verify the hypotheses of Lemma~\ref{lem:two-step} for the sequence
\[
  a_k=\abs{g_{k,0}^{(r)}},\qquad k\ge k_0,
\]
with $k_0$ the start of the regular tail \eqref{eq:regular-standing}. The recurrence \eqref{eq:delayed-recurrence} gives
\[
  a_{k+2}=\abs{\psi_k(\lambda_r)}\,a_{k+1}.
\]
Thus \eqref{eq:rough-bound} yields the two-step bound \eqref{eq:M-bound} with $M=B^p\ge1$.

It remains to verify the trigger implication \eqref{eq:trigger}.  Fix an arbitrary $\eps>0$ and put
\begin{equation}\label{eq:S-eps}
  S_\eps=\{k\ge k_0:\abs{g_{k,0}^{(r)}}\ge\eps\}.
\end{equation}
If $S_\varepsilon$ is finite, the trigger condition is immediate after its last element. Assume that $S_\varepsilon$ is infinite.  Partition $\Ip$ into
\begin{align}
  A_r&=\{I:r\in I\},\label{eq:partition-A}\\
  B_r&=\{I:I\subset\{r+1,\ldots,n\}\},\label{eq:partition-B}\\
  C_r&=\Ip\setminus(A_r\cup B_r).\label{eq:partition-C}
\end{align}
For $I\in A_r$, $\phi_I^{(r)}=0$.  For $I\in B_r$,
\begin{equation}\label{eq:rho-r}
  0<\phi_I^{(r)}
  \le\left(1-\frac{\lambda_r}{\lambda_n}\right)^p
  =:\rho_r<1,
\end{equation}
with $\rho_r=0$ if $B_r$ is empty.

Every $I\in C_r$ omits $r$ and contains at least one index $i(I)<r$.  Define
\[
  J(I)=(I\setminus\{i(I)\})\cup\{r\}\in A_r.
\]
By \eqref{eq:beta-phi},
\begin{equation}\label{eq:ratio-concrete}
  \frac{\beta_I^{(k)}}{\beta_{J(I)}^{(k)}}
  =c_I
   \frac{(g_{k,0}^{(i(I))})^2}{(g_{k,0}^{(r)})^2},
\end{equation}
where
\[
  c_I
  =\frac{\Delta_I^2}{\Delta_{J(I)}^2}
   \frac{\lambda_{i(I)}}{\lambda_r}>0
\]
is independent of $k$.  Since the denominator in \eqref{eq:pi-def} is at least $\beta_{J(I)}^{(k)}$, for $k\in S_\eps$,
\begin{equation}\label{eq:C-weight-vanishes}
  \pi_I^{(k)}
  \le\frac{\beta_I^{(k)}}{\beta_{J(I)}^{(k)}}
  \le\frac{c_I}{\eps^2}(g_{k,0}^{(i(I))})^2
  \longrightarrow0
  \qquad(k\in S_\eps,\ k\to\infty),
\end{equation}
by the induction hypothesis.
There are only finitely many sets in $C_r$, and every $\phi_I^{(r)}$ is a fixed finite scalar.  For $k\in S_\eps$,
\[
  \abs{\psi_k(\lambda_r)}
  \le \rho_r\sum_{I\in B_r}\pi_I^{(k)}
     +\sum_{I\in C_r}\pi_I^{(k)}\abs{\phi_I^{(r)}}
  \le \rho_r
     +\sum_{I\in C_r}\pi_I^{(k)}\abs{\phi_I^{(r)}}.
\]
Equations \eqref{eq:C-weight-vanishes} and \eqref{eq:central-average} therefore imply
\begin{equation}\label{eq:limsup-psi}
  \limsup_{\substack{k\to\infty\\ k\in S_\eps}}\abs{\psi_k(\lambda_r)}\le\rho_r<1.
\end{equation}
Choose the fixed contraction factor
\[
  c_r:=\frac{1+\rho_r}{2}\in(0,1).
\]
For every $\varepsilon>0$, \eqref{eq:limsup-psi} yields an index
$k_\varepsilon$ such that
\begin{equation}\label{eq:trigger-component}
  \abs{g_{k+2,0}^{(r)}}
  \le c_r\abs{g_{k+1,0}^{(r)}}
  \qquad\text{for every }k\in S_\eps\text{ with }k\ge k_\eps,
\end{equation}
while for $k\notin S_\eps$ the premise of \eqref{eq:trigger} is not met.  The factor $c=c_r$ therefore verifies \eqref{eq:trigger} for every $\eps>0$, with $k_\eps$ depending on $\eps$.  All hypotheses of Lemma~\ref{lem:two-step} are satisfied, and the lemma yields $a_k\to0$, that is, $g_{k,0}^{(r)}\to0$.  The induction is complete, so $g_{k,0}\to0$.

Finally, each individual factor $I-H/\theta$ has spectral norm at most $B$, and a nonterminating block has $p$ steps.  Therefore
\[
  \max_{0\le j\le p}\norm{g_{k,j}}
  \le B^p\norm{g_{k,0}}\longrightarrow0.
\]
Since $x_{k,j}=H^{-1}g_{k,j}$, the iterate convergence in \eqref{eq:within-conv} follows.  Undoing the translation and the active reduction of Section~\ref{sec:setup} restores the statement in the original variables.
\end{proof}

\begin{corollary}\label{cor:weighted-global}
For every fixed positive spectral weight $W=\omega(H)$, the rank-compressed weighted LMSD method defined by \eqref{eq:weighted-pencil} has the same finite-termination alternative and pointwise convergence conclusions as Theorem~\ref{thm:master}.
\end{corollary}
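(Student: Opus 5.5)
The plan is to transfer Theorem~\ref{thm:master} through the conjugacy of Proposition~\ref{prop:spectral-conjugacy}, with no new analysis. Fix $W=\omega(H)\succ0$ and set $S=W^{1/2}$. Let $(x_{k,j},g_{k,j})$ be the weighted rank-compressed trajectory generated by \eqref{eq:weighted-pencil}, started from $x_0$ with some warm-up stepsizes. Define the transformed trajectory $\widetilde x_{k,j}=Sx_{k,j}$, $\widetilde g_{k,j}=Sg_{k,j}$. By Proposition~\ref{prop:spectral-conjugacy}, this is exactly the trajectory of the standard rank-compressed Algorithm~\ref{alg:standard-sweep}. That standard run starts from $\widetilde x_0=Sx_0$ and uses the same warm-up stepsizes, and the two runs have the same sweep lengths $d_k$, the same extracted values $\theta_{k,j}$, and the same termination indices. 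The correspondence rests on two facts. First, $S$ commutes with $H$, so the gradient relation $\widetilde g=H\widetilde x$ and the step recursion are preserved. Second, the weighted pencil on $\Span(G_k)$ has the same values as the standard pencil on $\Span(SG_k)$, with $\rank(SG_k)=\rank(G_k)$. I would work in the translated coordinates \eqref{eq:quadratic}, where the minimizer is $0$ and is fixed by $S$.

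\textbf{Case 1: the transformed standard run terminates.} Suppose it reaches $\widetilde g=0$ at some step. Since $S$ is invertible, $g=S^{-1}\widetilde g=0$ at the same step, so the weighted run reaches the minimizer finitely. Corollary~\ref{cor:weighted-finite-branch} already identifies which strata produce this outcome.

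\textbf{Case 2: the transformed standard run does not terminate.} Then Theorem~\ref{thm:master} applies to it and gives
\[
\widetilde g_{k,0}\to0,\qquad \max_{0\le j\le p}\|\widetilde g_{k,j}\|\to0,\qquad \max_{0\le j\le p}\|\widetilde x_{k,j}\|\to0 .
\]
These convergences pull back to the weighted run using
\[
\|g_{k,j}\|\le\|S^{-1}\|_2\,\|\widetilde g_{k,j}\|,\qquad \|x_{k,j}\|\le\|S^{-1}\|_2\,\|\widetilde x_{k,j}\| .
\]
This yields \eqref{eq:within-conv} for the weighted method. The block lengths coincide, so the inner maxima over $0\le j\le p$ range over the same indices in both runs. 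Undoing the translation and the active reduction, exactly as at the end of the proof of Theorem~\ref{thm:master}, gives convergence of every iterate to the unique minimizer in the original variables.

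\textbf{Where care is needed.} The only delicate point is checking that the conjugacy holds on the whole trajectory, not just on one full-rank block. It must also hold at rank-compressed blocks and at the warm-up, and the dichotomy "terminates versus does not terminate" must be identical for both runs. Proposition~\ref{prop:spectral-conjugacy} asserts exactly this block by block: a basis $B$ of a rank-deficient history still maps to a basis $SB$ of the transformed history, and active support is preserved. So I expect this step to require only citing that proposition carefully rather than new estimates.
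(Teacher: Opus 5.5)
Your proposal is correct and follows the paper's proof: you conjugate the weighted run by $S=W^{1/2}$ via Proposition~\ref{prop:spectral-conjugacy} to a standard run with identical sweep lengths and termination indices, apply Theorem~\ref{thm:master}, and pull the conclusions back through the invertibility of $S$. Your explicit terminate/nonterminate case split and the bound through $\norm{S^{-1}}_2$ just spell out the paper's one-line ``use the invertibility of $W^{1/2}$.''
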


\begin{proof}
Proposition~\ref{prop:spectral-conjugacy} identifies the transformed weighted trajectory with a standard trajectory for the same Hessian.  Apply Theorem~\ref{thm:master} and then use the invertibility of $W^{1/2}$.
\end{proof}

We now formulate the standard sweep dynamics as a first-order map on consecutive endpoints.

\section{Endpoint-state formulation}\label{rl:sec:sweeps}

To write the delayed recurrence as a first-order system, we define the compatible endpoint pairs before spectral reduction.  For an SPD matrix $H$, let
\begin{equation}\label{rl:eq:Pp-H}
  \mathscr Q_p(H)
  :=\left\{
  q(t)=\prod_{j=1}^{p}(1-\alpha_jt):
  \alpha_j\in[\lambda_{\max}(H)^{-1},\lambda_{\min}(H)^{-1}]
  \right\},
\end{equation}
and define
\begin{equation}\label{rl:eq:Omega-H}
  \Omega_p(H)
  :=\{(u,v):v=q(H)u\text{ for some }q\in\mathscr Q_p(H)\}.
\end{equation}
We use the product norm
\begin{equation}\label{rl:eq:product-norm}
  \norm{(u,v)}_\times:=\bigl(\norm{u}^2+\norm{v}^2\bigr)^{1/2}.
\end{equation}

This cone contains every consecutive endpoint pair of a nonterminating sweep and serves as the state space for the rate analysis.

For $(u,v)\in\Omega_p(H)$, compatibility gives $v=q(H)u$ for some
$q\in\mathscr Q_p(H)$. Hence both endpoints and all subsequent
polynomial updates lie in
\[
  E(u)=\Span\{P_\ell u:P_\ell u\ne0\}.
\]

Identifying the normalized nonzero vectors $P_\ell u$ with coordinate vectors reduces $H$ isometrically to a diagonal matrix with distinct eigenvalues, while preserving projected matrices, endpoint updates, and product norms. We therefore work in the coordinates
\eqref{eq:simple-spectrum}, set
\[
  K_p(u)=[u,Hu,\ldots,H^{p-1}u],
\]
retain the support notation $S(u)$ and $s(u)$ from Section~\ref{sec:setup}, and abbreviate
\[
  \Pp=\mathscr Q_p(H),
  \qquad
  \Om=\Omega_p(H).
\]
The cases $s(u)>p$, $s(u)=p$, and $s(u)<p$ correspond respectively to the regular branch, the full-rank termination boundary, and the rank-deficient termination boundary.

If $s(u)\ge p$, Lemma~\ref{lem:krylov-rank} gives $\rank K_p(u)=p$.  The generalized eigenvalues of
\begin{equation}\label{rl:eq:K-pencil}
  K_p(u)^THK_p(u)z
  =\theta K_p(u)^TK_p(u)z
\end{equation}
are therefore well defined; denote them by $\theta_1(u),\ldots,\theta_p(u)$ and set
\begin{equation}\label{rl:eq:Psi}
  \Psi_u(t)
  :=\prod_{j=1}^{p}\left(1-\frac{t}{\theta_j(u)}\right).
\end{equation}
Because $H$ is symmetric positive definite and $K_p(u)$ has full column rank,
\[
  K_p(u)^THK_p(u)\succ0.
\]
Consequently,
\begin{equation}\label{rl:eq:Psi-det}
  \Psi_u(t)
  =\frac{\det\!\left(K_p(u)^T(H-tI)K_p(u)\right)}
         {\det\!\left(K_p(u)^THK_p(u)\right)}
\end{equation}
has a strictly positive denominator.  If $s(u)=p$, the columns of $K_p(u)$ span the active invariant subspace, so the values in \eqref{rl:eq:K-pencil} are exactly $\{\lambda_i:i\in S(u)\}$ and
\begin{equation}\label{rl:eq:s-equals-p-annihilation}
  \Psi_u(H)v=0
  \qquad\text{for every }(u,v)\in\Om\text{ with }s(u)=p.
\end{equation}
Here compatibility gives $S(v)\subseteq S(u)$.  When $s(u)<p$, the $p$-column Krylov matrix is rank deficient; exact compression returns the $s(u)$ active eigenvalues and the following effective sweep terminates.

Define the endpoint output by
\begin{equation}\label{rl:eq:F-def}
  F_p:\Om\longrightarrow\R^n,
  \qquad
F_p(u,v):=
\begin{cases}
  \Psi_u(H)v, & s(u)>p,\\[0.2em]
  0, & s(u)\le p.
\end{cases}
\end{equation}
When $s(u)=p$, the projected values are the active eigenvalues, so
$\Psi_u(H)v=0$.  In particular, $F_p(0,0)=0$.  Define the transition
\begin{equation}\label{rl:eq:T-def}
  T_p:\Om\longrightarrow\R^n\times\R^n,
  \qquad
  T_p(u,v):=\bigl(v,F_p(u,v)\bigr).
\end{equation}
Section~\ref{rl:sec:cone} shows that $T_p$ is a continuous self-map of $\Omega_p(H)$.

\section{A compactness principle for homogeneous dynamics}\label{rl:sec:homogeneous}

Let $X$ be a finite-dimensional normed space. A nonempty closed set $\Omega\subset X$ is a closed cone if $0\in\Omega$ and $t\Omega\subseteq\Omega$ for every $t\ge0$. Related stability results for homogeneous discrete-time systems are discussed in, for example \cite{ShenHu2012}. Here we give a short theorem which directly supplies the uniform $R$-linear estimate needed below.

\begin{theorem}\label{rl:thm:homogeneous-rate}
Let $\Omega\subset X$ be a closed cone and let $T:\Omega\to\Omega$ be continuous and positively homogeneous of degree one:
\begin{equation}\label{rl:eq:homogeneous}
  T(tx)=tT(x),
  \qquad t\ge0,\quad x\in\Omega.
\end{equation}
If $T^k(x)\to0$ for every $x\in\Omega$, then there exist $C\ge1$ and $\rho\in(0,1)$ such that
\begin{equation}\label{rl:eq:Rlinear-abstract}
  \norm{T^k(x)}\le C\rho^k\norm{x},
  \qquad x\in\Omega,\quad k\ge0.
\end{equation}
\end{theorem}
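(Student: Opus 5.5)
The plan is the standard compactness-plus-homogeneity argument. Let $\Sigma:=\{x\in\Omega:\norm{x}=1\}$. Since $\Omega$ is closed and $X$ is finite dimensional, $\Sigma$ is compact. If $\Sigma$ is empty, then $\Omega=\{0\}$, and homogeneity gives $T(0)=0$, so the estimate is trivial. Homogeneity also reduces everything to $\Sigma$. Indeed, for $x\ne0$ the point $x/\norm{x}$ lies in $\Omega$ by the cone property, and $T^k(x)=\norm{x}\,T^k(x/\norm{x})$ follows by induction from \eqref{rl:eq:homogeneous}, because each iterate stays in $\Omega$. It therefore suffices to prove $\norm{T^k(y)}\le C\rho^k$ for $y\in\Sigma$.

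\textbf{Step 1: a uniform contraction time.} For each $y\in\Sigma$, pointwise convergence gives an index $N_y$ with $\norm{T^{N_y}(y)}<1/2$. The map $T^{N_y}$ is continuous, so there is an open neighbourhood $U_y$ of $y$ in $\Sigma$ on which $\norm{T^{N_y}(z)}<1/2$. Compactness yields a finite subcover $U_{y_1},\dots,U_{y_m}$; set $N:=\max_i N_{y_i}$. Thus every $z\in\Sigma$ has some $n(z)\le N$ with $\norm{T^{n(z)}(z)}<1/2$. We cannot assume that a single common time works for all $z$.

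\textbf{Step 2: a uniform growth bound on bounded times.} Each $T^j$ with $0\le j\le N$ is continuous on the compact set $\Sigma$, so $L:=\max_{0\le j\le N}\sup_{z\in\Sigma}\norm{T^j(z)}$ is finite, and $L\ge1$ because $j=0$ is included. By homogeneity, $\norm{T^j(x)}\le L\norm{x}$ for all $x\in\Omega$ and all $j\le N$.

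\textbf{Step 3: iterate the variable-time halving.} For $x\in\Omega\setminus\{0\}$, build times $0=k_0<k_1<\cdots$ by setting $k_{i+1}:=k_i+n\bigl(T^{k_i}(x)/\norm{T^{k_i}(x)}\bigr)$. If some iterate vanishes, all later ones vanish too, since $T(0)=0$. Homogeneity gives $\norm{T^{k_{i}}(x)}\le 2^{-i}\norm{x}$ and $k_{i+1}-k_i\le N$, so $k_i\le iN$. For general $k$, choose $i$ with $k_i\le k<k_{i+1}$. Step 2 then gives $\norm{T^k(x)}\le L\,2^{-i}\norm{x}$. Because $k<k_{i+1}\le (i+1)N$, we have $i>k/N-1$. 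Hence $\norm{T^k(x)}\le 2L\,(2^{-1/N})^k\norm{x}$, and the claim holds with $C=2L$ and $\rho=2^{-1/N}$.

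The main obstacle is the bookkeeping in Step 1. The halving time depends on the point, and continuity only controls finitely many iterates at a time. The remedy is to use the finite cover to bound the halving times by $N$, and then to restart at each normalized iterate, which homogeneity permits. No single uniform time is needed.
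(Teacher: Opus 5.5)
Your proposal is correct and follows essentially the same argument as the paper. A finite subcover of the unit sphere $\Sigma$ of $\Omega$ gives a uniform bound $N$ on the point-dependent halving times, and continuity on $\Sigma$ gives the transient bound $L$ over the first $N$ iterates. Restarting at normalized iterates then yields $\norm{T^k(x)}\le 2L(2^{-1/N})^k\norm{x}$. The only cosmetic difference is that you note $L\ge1$ directly, so $C=2L$ suffices, whereas the paper writes $C=\max\{1,2L\}$.
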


\begin{proof}
If $\Omega=\{0\}$, both conclusions are immediate.  Otherwise let
\[
  \Sigma:=\{x\in\Omega:\norm{x}=1\}.
\]
The set $\Sigma$ is nonempty and compact.  For each $x\in\Sigma$, pointwise convergence gives an integer $n_x\ge1$ such that $\norm{T^{n_x}(x)}<1/2$.  By continuity of $T^{n_x}$, there is a relative neighborhood $U_x$ of $x$ in $\Sigma$ on which
\[
  \norm{T^{n_x}(y)}\le\tfrac12.
\]
Choose a finite subcover $U_{x_1},\ldots,U_{x_r}$ and set
\[
  N_0:=\max_{1\le i\le r} n_{x_i}.
\]
Thus every $y\in\Sigma$ has an integer $m(y)\in\{1,\ldots,N_0\}$ for which $\norm{T^{m(y)}(y)}\le1/2$.  Continuity and compactness also give the finite transient bound
\begin{equation}\label{rl:eq:transient-bound}
  L:=\max_{0\le j\le N_0}\ \max_{y\in\Sigma}\norm{T^j(y)}<\infty.
\end{equation}

Fix $x\in\Omega\setminus\{0\}$.  Set $z_0=x$ and $s_0=0$.  Whenever $z_q\ne0$, choose
\[
  m_q:=m(z_q/\norm{z_q})\in\{1,\ldots,N_0\},
  \qquad
  s_{q+1}:=s_q+m_q,
  \qquad
  z_{q+1}:=T^{m_q}(z_q).
\]
Every iterate of $T$ is positively homogeneous, so
\[
  \norm{z_{q+1}}\le\tfrac12\norm{z_q}
  \quad\text{and hence}\quad
  \norm{z_q}\le2^{-q}\norm{x}
\]
for every constructed stage.  If $z_{q_*}=0$ for the first time, then $T(0)=0$ by \eqref{rl:eq:homogeneous}, and $T^k(x)=0$ for every $k\ge s_{q_*}$.  For $k<s_{q_*}$ there is a unique $q<q_*$ with $s_q\le k<s_{q+1}$.  If no zero occurs, then $s_q\ge q\to\infty$, so the same unique $q$ exists for every $k\ge0$.  In either case, whenever $T^k(x)$ has not already vanished, positive homogeneity and \eqref{rl:eq:transient-bound} give
\[
  \norm{T^k(x)}
  =\norm{T^{k-s_q}(z_q)}
  \le L\norm{z_q}
  \le L2^{-q}\norm{x}.
\]
Since $k<s_{q+1}\le(q+1)N_0$, one has $q>k/N_0-1$.  Therefore, also at the zero iterates,
\[
  \norm{T^k(x)}
  \le 2L\bigl(2^{-1/N_0}\bigr)^k\norm{x}.
\]
Equation \eqref{rl:eq:Rlinear-abstract} follows with $C=\max\{1,2L\}$ and $\rho=2^{-1/N_0}$.
\end{proof}

\section{Compatible endpoint cone and boundary continuity}\label{rl:sec:cone}

\subsection{Cone properties and the endpoint self-map}

\begin{lemma}\label{rl:lem:cone-closed}
The set $\Om$ is a closed cone.  If $(u,v)\in\Om$, then
\begin{equation}\label{rl:eq:support-inclusion}
  S(v)\subseteq S(u).
\end{equation}
\end{lemma}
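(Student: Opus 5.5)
We must show three things about $\Om$: it is a cone, it is closed, and compatibility forces the support inclusion \eqref{rl:eq:support-inclusion}. We work in the reduced diagonal coordinates \eqref{eq:simple-spectrum}, but every step below is basis-free, so it applies equally to $\Omega_p(H)$ before reduction.

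\emph{Cone property.} First, $(0,0)\in\Om$, since $v=q(H)0=0$ for any $q\in\Pp$. Next, if $(u,v)\in\Om$ with $v=q(H)u$ and $t\ge0$, then $tv=q(H)(tu)$ by linearity, so $t(u,v)\in\Om$. Note that the compatibility polynomial $q$ is unchanged by this scaling.

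\emph{Closedness.} We use compactness of the parameter set. The map
\[
  \Phi:[\lambda_n^{-1},\lambda_1^{-1}]^p\times\R^n\to\R^n\times\R^n,
  \qquad
  \Phi(\alpha,u)=\Bigl(u,\prod_{j=1}^p(I-\alpha_jH)u\Bigr),
\]
is continuous, and $\Om$ is its image. Take a sequence $(u_m,v_m)\in\Om$ converging to $(u,v)$, and choose parameters $\alpha^{(m)}$ in the compact box with $v_m=\prod_j(I-\alpha^{(m)}_jH)u_m$. Pass to a subsequence along which $\alpha^{(m)}\to\alpha$ in the box. Because the matrix polynomial depends continuously on $\alpha$ and $u_m\to u$, we get $v=\prod_j(I-\alpha_jH)u$. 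Hence $(u,v)\in\Om$. The only point that needs care is that the stepsize interval in \eqref{rl:eq:Pp-H} is closed and bounded, which makes the box compact.

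\emph{Support inclusion.} Suppose $v=q(H)u$. Then $P_\ell v=q(\mu_\ell)P_\ell u$; in diagonal coordinates this reads $v_i=q(\lambda_i)u_i$. So $u_i=0$ forces $v_i=0$, which gives $S(v)\subseteq S(u)$. This is the same computation as \eqref{eq:finite-support-inclusion} in Proposition~\ref{prop:finite-branch}.

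None of these steps is a real obstacle.
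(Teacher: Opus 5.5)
Your proof is correct and follows essentially the same route as the paper. Cone invariance comes from linearity of $v=q(H)u$. Closedness comes from extracting a convergent subsequence of the stepsize parameters in the compact box $[\lambda_n^{-1},\lambda_1^{-1}]^p$ and passing to the limit in $v^\ell=q_\ell(H)u^\ell$. The support inclusion follows from the coordinatewise identity $v_i=q(\lambda_i)u_i$.
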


\begin{proof}
Cone invariance follows from $v=q(H)u$.  Let $(u^\ell,v^\ell)\to(u,v)$ with $(u^\ell,v^\ell)\in\Om$.  Choose
\[
  v^\ell=q_\ell(H)u^\ell,
  \qquad
  q_\ell(t)=\prod_{j=1}^{p}(1-\alpha_j^\ell t),
\]
where the parameter vectors belong to the compact box $[\lambda_n^{-1},\lambda_1^{-1}]^p$.  A subsequence satisfies $\alpha_j^\ell\to\alpha_j$ for every $j$.  The corresponding polynomials converge at every spectral point to a polynomial $q\in\Pp$, and passage to the limit gives $v=q(H)u$.  Thus $(u,v)\in\Om$.  Finally, $v_i=q(\lambda_i)u_i$ gives \eqref{rl:eq:support-inclusion}.
\end{proof}

\begin{proposition}\label{rl:prop:selfmap-homogeneous}
The transition $T_p$ defined in \eqref{rl:eq:T-def} maps $\Om$ into itself.  For every $(u,v)\in\Om$ and every $t\ge0$,
\begin{equation}\label{rl:eq:F-homogeneous}
  F_p(tu,tv)=tF_p(u,v),
  \qquad
  T_p(tu,tv)=tT_p(u,v).
\end{equation}
\end{proposition}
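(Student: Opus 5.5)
We need to show that $T_p(u,v)=(v,F_p(u,v))$ lies in $\Om$, that is, $F_p(u,v)=q(H)v$ for some $q\in\Pp$, and that $F_p$ and $T_p$ are positively homogeneous. The argument splits by the support size $s(u)$.

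\textbf{Self-map, case $s(u)\le p$.} Here $F_p(u,v)=0$, so we only need $(v,0)\in\Om$. Fix any $\alpha\in[\lambda_n^{-1},\lambda_1^{-1}]$; for concreteness take $\alpha_j=\lambda_n^{-1}$ for all $j$. Then every nonzero $v$ would require a polynomial of $\Pp$ vanishing on $S(v)$, which is not automatic. I would therefore use Lemma~\ref{rl:lem:cone-closed}: it gives $S(v)\subseteq S(u)$, so $s(v)\le p$. Take the roots $\lambda_i$ for $i\in S(v)$, padded arbitrarily to $p$ values with entries of $\{\lambda_1,\dots,\lambda_n\}$ (for example, repeats of $\lambda_n$). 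Set $q(t)=\prod_{j}(1-t/\tau_j)$. Every $\tau_j^{-1}\in[\lambda_n^{-1},\lambda_1^{-1}]$, so $q\in\Pp$, and $q(H)v=0$ because $q(\lambda_i)=0$ on $S(v)$. Hence $(v,0)\in\Om$. The same construction covers $v=0$.

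\textbf{Self-map, case $s(u)>p$.} Here $F_p(u,v)=\Psi_u(H)v$. The values $\theta_j(u)$ are the Ritz values of $H$ on $\Span K_p(u)$, a $p$-dimensional subspace, so Lemma~\ref{lem:enclosure} places them in $[\lambda_1,\lambda_n]$. Hence $\alpha_j=\theta_j(u)^{-1}\in[\lambda_n^{-1},\lambda_1^{-1}]$ and $\Psi_u\in\Pp$, which gives $(v,\Psi_u(H)v)\in\Om$.

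\textbf{Homogeneity.} For $t=0$, note $s(0)=0\le p$, so $F_p(0,0)=0=0\cdot F_p(u,v)$. For $t>0$, $S(tu)=S(u)$, so the case selection is unchanged. Moreover $K_p(tu)=tK_p(u)$, so the pencil \eqref{rl:eq:K-pencil} is scaled by $t^2$ on both sides and its generalized eigenvalues are unchanged, giving $\Psi_{tu}=\Psi_u$. Alternatively, in \eqref{rl:eq:Psi-det} both determinants scale by $t^{2p}$. Thus $F_p(tu,tv)=\Psi_u(H)(tv)=tF_p(u,v)$ in the regular case, and $F_p(tu,tv)=0=tF_p(u,v)$ in the boundary case. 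Homogeneity of $T_p$ follows componentwise.

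\textbf{Main obstacle.} The only delicate point is the boundary case: there $F_p=0$ by definition rather than by a compatible polynomial, so membership of $(v,0)$ in $\Om$ needs the explicit annihilating polynomial built from the support inclusion \eqref{rl:eq:support-inclusion} together with $s(u)\le p$.
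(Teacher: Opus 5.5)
Your proposal is correct and takes essentially the same route as the paper. On the regular stratum the enclosure lemma puts $\Psi_u$ in $\Pp$. On the termination strata the support inclusion $S(v)\subseteq S(u)$ yields an explicit annihilating polynomial in $\Pp$, with roots $\lambda_i$ for $i\in S(v)$ padded by legal factors. Homogeneity follows from $S(tu)=S(u)$ and $K_p(tu)=tK_p(u)$.

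The only blemish is the aborted opening of your boundary case, where you fix all $\alpha_j=\lambda_n^{-1}$. You should delete it, because the padded-root construction you then give already handles both $v=0$ and $v\ne0$.
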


\begin{proof}
Let $(u,v)\in\Om$.  If $s(u)>p$, Lemma~\ref{lem:enclosure} places every $\theta_j(u)$ in $[\lambda_1,\lambda_n]$.  Hence the reciprocal roots in \eqref{rl:eq:Psi} lie in $[\lambda_n^{-1},\lambda_1^{-1}]$, so $\Psi_u\in\Pp$ and
\[
  \bigl(v,F_p(u,v)\bigr)
  =\bigl(v,\Psi_u(H)v\bigr)\in\Om.
\]
If $s(u)\le p$, then \eqref{rl:eq:support-inclusion} gives $s(v)\le p$.  If $v=0$, any polynomial in $\Pp$ annihilates it.  If $v\ne0$, choose factors $1-t/\lambda_i$ for all $i\in S(v)$ and fill the remaining positions, if any, by repeating an arbitrary legal factor.  This produces a polynomial in $\Pp$ that annihilates $v$.  Since $F_p(u,v)=0$ on both finite-termination strata, $(v,0)\in\Om$.  Thus $T_p$ is a self-map.

For $t>0$, one has $S(tu)=S(u)$ and $K_p(tu)=tK_p(u)$.  On the regular stratum the two matrices in \eqref{rl:eq:K-pencil} are both multiplied by $t^2$, so the generalized eigenvalues and $\Psi_u$ are unchanged.  On the two finite-termination strata the output is zero.  Hence $F_p(tu,tv)=tF_p(u,v)$ for $t>0$.  At $t=0$, the state is $(0,0)$ and the same identity follows from the definition.  The identity for $T_p$ follows componentwise.
\end{proof}

\subsection{Cauchy--Binet coordinates}

If $p\ge n$, then $s(u)\le n\le p$ for every state, so $F_p\equiv0$ and continuity is immediate.  The formulas below are needed for possible regular states, which necessarily satisfy $n>p$.

Let $V\in\R^{n\times p}$ be the Vandermonde matrix
\begin{equation}\label{rl:eq:Vandermonde}
  V_{ij}=\lambda_i^{j-1},
  \qquad 1\le i\le n,\quad1\le j\le p.
\end{equation}
For a $p$-element set $I\subset\{1,\ldots,n\}$, let $V_I$ be the corresponding square row submatrix and set
\begin{equation}\label{rl:eq:Delta}
  \Delta_I:=\det(V_I)\ne0.
\end{equation}
Write $\Ip$ for all $p$-element index sets.  For $r\in\{1,\ldots,n\}$, define
\begin{equation}\label{rl:eq:beta-phi}
  \beta_I(u):=\Delta_I^2\prod_{i\in I}\lambda_i u_i^2,
  \qquad
  \phi_I^{(r)}:=\prod_{i\in I}\left(1-\frac{\lambda_r}{\lambda_i}\right).
\end{equation}

\begin{lemma}\label{rl:lem:CB-representation}
If $s(u)>p$, then for every spectral index $r$,
\begin{equation}\label{rl:eq:CB-ratio}
  \Psi_u(\lambda_r)
  =\frac{\displaystyle\sum_{I\in\Ip}\beta_I(u)\phi_I^{(r)}}
         {\displaystyle\sum_{I\in\Ip}\beta_I(u)}.
\end{equation}
The normalized coefficients are nonnegative and sum to one.
\end{lemma}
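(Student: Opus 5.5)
The proof runs parallel to that of Proposition~\ref{prop:det-formula}, now for a general state $u$ with $s(u)>p$. Some components $u_i$ may vanish, so the weights $\beta_I(u)$ may be zero. The first step is to justify the determinant ratio \eqref{rl:eq:Psi-det}. Since $s(u)>p$, Lemma~\ref{lem:krylov-rank} gives $\rank K_p(u)=p$, so $K_p(u)^TK_p(u)\succ0$ and $K_p(u)^THK_p(u)\succ0$. The generalized eigenvalues of \eqref{rl:eq:K-pencil} satisfy
\[
\det\bigl(K_p(u)^THK_p(u)-\lambda K_p(u)^TK_p(u)\bigr)=\det\bigl(K_p(u)^TK_p(u)\bigr)\prod_{j=1}^p(\theta_j(u)-\lambda).
\]
Evaluating at $\lambda=t$ and at $\lambda=0$ and taking the quotient gives \eqref{rl:eq:Psi-det}, since the factor $\det(K_p(u)^TK_p(u))$ cancels.

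Next, I would expand both determinants by Cauchy--Binet. Write $K_p(u)=D(u)V$ with $D(u)=\diag(u_1,\ldots,u_n)$. Then
\[
K_p(u)^T(H-tI)K_p(u)=V^T\,\diag\bigl((\lambda_i-t)u_i^2\bigr)\,V.
\]
This has the form $A^TB$ with $A=V$ and $B=\diag((\lambda_i-t)u_i^2)V$, so Cauchy--Binet gives
\[
\det\bigl(K_p(u)^T(H-tI)K_p(u)\bigr)=\sum_{I\in\Ip}\det(V_I)\det(B_I)=\sum_{I\in\Ip}\Delta_I^2\prod_{i\in I}(\lambda_i-t)u_i^2 .
\]
Set $t=\lambda_r$ and factor $\lambda_i-\lambda_r=\lambda_i(1-\lambda_r/\lambda_i)$. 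The $I$-th term then becomes $\beta_I(u)\phi_I^{(r)}$. Setting $t=0$ gives the denominator $\sum_I\beta_I(u)$. Taking the quotient yields \eqref{rl:eq:CB-ratio}.

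Finally, for the normalized coefficients: every $\beta_I(u)=\Delta_I^2\prod_{i\in I}\lambda_iu_i^2$ is nonnegative, because $\Delta_I\neq0$ by \eqref{rl:eq:Delta} and $\lambda_i>0$. The one point that needs care is showing the denominator is positive rather than merely nonnegative, since individual weights can now vanish. This follows because $s(u)>p$, so at least one $p$-subset $I\subseteq S(u)$ exists, and its weight $\beta_I(u)$ is strictly positive. It also agrees with the positive definiteness of $K_p(u)^THK_p(u)$ already noted. The weights $\beta_I(u)/\sum_J\beta_J(u)$ are therefore well defined, nonnegative, and sum to one. I expect no real obstacle beyond this positivity check and confirming that Cauchy--Binet applies algebraically with the sign-indefinite diagonal $\lambda_i-t$.
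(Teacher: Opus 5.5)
Your proof is correct and is essentially the paper's argument. The paper restricts to the active coordinates $S(u)$, invokes Proposition~\ref{prop:det-formula} there, and pads the sums with the vanishing terms $\beta_I(u)=0$ for $I\not\subseteq S(u)$. You instead run the determinant-ratio and Cauchy--Binet computation directly in full coordinates, so the zero weights appear automatically, and you conclude with the same positivity check via a $p$-subset of $S(u)$.
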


\begin{proof}
Let $S=S(u)$.  Since $|S|>p$, the active reduction indexed by $S$ is in the regular regime of Proposition~\ref{prop:det-formula}: the reduced Krylov matrix has full column rank.  Applying \eqref{eq:central-average} on that active subspace gives \eqref{rl:eq:CB-ratio} with both sums restricted to the $p$-element subsets of $S$.  If $I\not\subseteq S$, then $\beta_I(u)=0$, so extending the sums to all of $\Ip$ changes neither numerator nor denominator.  At least one $p$-element subset of $S$ exists, and every factor in its $\beta_I(u)$ is positive.  The denominator is therefore strictly positive.
\end{proof}

\subsection{Continuity across the finite-termination strata}

\begin{theorem}\label{rl:thm:joint-continuity}
The maps $F_p:\Om\to\R^n$ and $T_p:\Om\to\Om$ are continuous.  In particular, the transition is continuous at the full-rank finite-termination boundary $s(u)=p$ and at every rank-changing boundary $s(u)<p$.
\end{theorem}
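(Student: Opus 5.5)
Continuity of $T_p$ follows from continuity of $F_p$, since the first component of $T_p$ is the projection $(u,v)\mapsto v$. I would prove continuity of $F_p$ componentwise in the spectral coordinates, using the Cauchy--Binet representation of Lemma~\ref{rl:lem:CB-representation}. For $s(u)>p$ the $r$th component is
\[
  F_p(u,v)_r=\Psi_u(\lambda_r)\,v_r
  =\frac{\sum_{I\in\Ip}\beta_I(u)\phi_I^{(r)}}{\sum_{I\in\Ip}\beta_I(u)}\,v_r ,
\]
and $F_p(u,v)_r=0$ when $s(u)\le p$. The regular stratum $\{s(u)>p\}$ is relatively open in $\Om$, because nonvanishing of finitely many coordinates is an open condition. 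On it the denominator $\sum_I\beta_I(u)$ is a polynomial in $u$ that is strictly positive. Continuity at regular points is therefore immediate.

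The remaining task is continuity at a boundary state $(u^*,v^*)$ with $s(u^*)\le p$, where $F_p(u^*,v^*)=0$. Take $(u^\ell,v^\ell)\to(u^*,v^*)$ in $\Om$. Terms with $s(u^\ell)\le p$ contribute zero, so we may restrict to regular $u^\ell$. We must show $\Psi_{u^\ell}(\lambda_r)v^\ell_r\to0$ for each $r$.

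If $r\notin S(u^*)$, then $v^*_r=0$ by Lemma~\ref{rl:lem:cone-closed}. The multiplier is uniformly bounded, since it is a convex combination of the fixed numbers $\phi_I^{(r)}$, so $|\Psi_{u^\ell}(\lambda_r)|\le\max_I|\phi_I^{(r)}|$. Together with $v^\ell_r\to v_r^*=0$, this gives $F_p(u^\ell,v^\ell)_r\to0$.

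The main obstacle is the case $r\in S(u^*)$, where $v^*_r$ need not vanish. There I must show that the multiplier itself tends to zero, that is, that the convex weights concentrate on sets $I$ containing $r$, for which $\phi_I^{(r)}=0$.

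Set $S^*=S(u^*)$ with $d=|S^*|\le p$. For each $I\in\Ip$ with $r\notin I$, I would construct $J(I)\ni r$ by the exchange trick used in the proof of Theorem~\ref{thm:master}. The aim is that $\beta_I(u^\ell)/\beta_{J(I)}(u^\ell)\to0$. Because $|I|=p\ge d$ and $r\notin I$, the set $I$ must contain at least one index $i\notin S^*$. Indeed, otherwise $I\subseteq S^*\setminus\{r\}$, which has fewer than $p$ elements.

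Replace one such $i$ by $r$: set $J=(I\setminus\{i\})\cup\{r\}$. Then
\[
  \frac{\beta_I(u^\ell)}{\beta_J(u^\ell)}
  =c_{I,J}\frac{(u^\ell_i)^2}{(u^\ell_r)^2},
\]
with $c_{I,J}$ a positive constant. Since $u^\ell_i\to u^*_i=0$ and $u^\ell_r\to u^*_r\ne0$, this ratio tends to zero. Bounding the denominator of $\pi_I$ below by $\beta_J$ gives $\pi_I(u^\ell)\to0$ for every $I\not\ni r$. Hence
\[
  \Psi_{u^\ell}(\lambda_r)=\sum_{I\not\ni r}\pi_I(u^\ell)\phi_I^{(r)}\to0,
\]
and since $v^\ell_r$ is bounded, $F_p(u^\ell,v^\ell)_r\to0$.

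Combining the two cases gives $F_p(u^\ell,v^\ell)\to0=F_p(u^*,v^*)$. This covers both the full-rank boundary $s(u^*)=p$ and the rank-deficient boundary $s(u^*)<p$, including the vertex $(0,0)$, where every coordinate falls under the first case. Continuity of $T_p$ then follows.
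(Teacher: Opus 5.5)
Your proof is correct. It uses the same ingredients as the paper: the Cauchy--Binet ratio of Lemma~\ref{rl:lem:CB-representation}, the index exchange $I\mapsto J=(I\setminus\{i\})\cup\{r\}$, a uniform multiplier bound on coordinates outside $S(u^*)$ where $v^*$ vanishes, and the split into regular and terminating indices. What differs is how you organize the boundary.

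\textbf{The paper's organization.} It treats the full-rank boundary $s(u)=p$ as a separate case. There $K_p(u)$ still has rank $p$, so the determinant ratio \eqref{rl:eq:Psi-det} stays continuous up to the limit. The invariant-subspace annihilation \eqref{rl:eq:s-equals-p-annihilation} then gives $\Psi_u(\lambda_r)=0$ for $r\in S(u)$. The exchange argument is used only for $1\le s(u)<p$, and $u=0$ is a fourth case.

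\textbf{Your organization.} You observe that $|S^*\setminus\{r\}|\le p-1$, so every $I\not\ni r$ must meet the complement of $S^*$. This lets the exchange argument run verbatim for every $s(u^*)\le p$. You therefore never need the annihilation identity or the rank of the limiting Krylov matrix, and the vertex $(0,0)$ falls out automatically.

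\textbf{What each route gives.} Yours gives a single uniform boundary case. The paper's Case~2 reuses the regular-case determinant continuity. It also makes explicit that $s(u)=p$ is a full-rank limit where $\Psi_{u^\ell}(\lambda_i)\to\Psi_u(\lambda_i)$ at every spectral point, so the only delicate mechanism is genuine rank loss.

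\textbf{One detail to add.} Your identity for $\beta_I(u^\ell)/\beta_J(u^\ell)$ presupposes $\beta_J(u^\ell)>0$. If $\beta_I(u^\ell)=0$, then $\pi_I=0$ and there is nothing to prove. Otherwise every coordinate indexed by $I$ is nonzero, and together with $u_r^\ell\ne0$ for large $\ell$ this gives $\beta_J(u^\ell)>0$.
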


\begin{proof}
Let $(u^\ell,v^\ell)\to(u,v)$ in $\Om$, and set $y^\ell:=F_p(u^\ell,v^\ell)$ and $y:=F_p(u,v)$.  Since the first component of $T_p$ is $v$, it is enough to prove $y^\ell\to y$.  Partition the indices into
\begin{equation}\label{rl:eq:index-partition}
  I_+:=\{\ell:s(u^\ell)>p\},
  \qquad
  I_-:=\{\ell:s(u^\ell)\le p\}.
\end{equation}
For $\ell\in I_-$, the definition gives $y^\ell=0$.  To prove convergence of the full sequence, it is enough in each case to prove that every infinite restricted subsequence indexed by $I_+$ or $I_-$ converges to the same limit $y$; a finite index class is irrelevant.

\emph{Case 1: $s(u)>p$.}
Choose $p+1$ nonzero coordinates of $u$.  They remain nonzero for all sufficiently large $\ell$, so eventually $\ell\in I_+$.  The matrix $K_p(u)$ has full column rank and
\[
  K_p(u)^THK_p(u)\succ0.
\]
The determinants in \eqref{rl:eq:Psi-det} depend continuously on $u$, and the limiting denominator is positive.  Hence, for every spectral coordinate $i$,
\begin{equation}\label{rl:eq:regular-Psi-cont}
  \Psi_{u^\ell}(\lambda_i)
  \longrightarrow \Psi_u(\lambda_i).
\end{equation}
It follows coordinatewise that $y^\ell\to\Psi_u(H)v=y$.

\emph{Case 2: $s(u)=p$.}
This is a full-column-rank finite-termination boundary, and $y=0$ by \eqref{rl:eq:F-def}.  The subsequence indexed by $I_-$ is identically zero.  On an infinite $I_+$ subsequence, the determinant argument from Case~1 remains valid because $K_p(u)$ still has rank $p$ and its limiting denominator is positive.  For $r\in S(u)$, \eqref{rl:eq:s-equals-p-annihilation} gives
\[
  \Psi_u(\lambda_r)=0,
\]
so \eqref{rl:eq:regular-Psi-cont} and $v_r^\ell\to v_r$ imply $y_r^\ell\to0$.  If $i\notin S(u)$, then $v_i=0$ by Lemma~\ref{rl:lem:cone-closed}.  Lemma~\ref{lem:enclosure} gives the uniform bound
\begin{equation}\label{rl:eq:uniform-multiplier-bound}
  \abs{\Psi_{u^\ell}(\lambda_i)}
  \le B_p,
  \qquad
  B_p:=\max\left\{1,\frac{\lambda_n}{\lambda_1}-1\right\}^{p},
\end{equation}
for $\ell\in I_+$, and therefore $y_i^\ell\to0$.  Both restricted subsequences converge to zero, hence so does the original sequence.

\emph{Case 3: $1\le s(u)<p$.}
Again $y=0$ and the $I_-$ subsequence is identically zero.  Consider an infinite $I_+$ subsequence.  For $i\notin S(u)$, \eqref{rl:eq:uniform-multiplier-bound} and $v_i^\ell\to0$ give $y_i^\ell\to0$.  Fix $r\in S(u)$.  We prove
\begin{equation}\label{rl:eq:Psi-to-zero}
  \Psi_{u^\ell}(\lambda_r)\longrightarrow0
  \qquad(\ell\in I_+).
\end{equation}
Every set $I\in\Ip$ containing $r$ has $\phi_I^{(r)}=0$.  Let $I\in\Ip$ omit $r$.  Since $|S(u)|<p$, the set $I$ contains an index $h\notin S(u)$.  Put
\begin{equation}\label{rl:eq:I-to-J}
  J:=(I\setminus\{h\})\cup\{r\}.
\end{equation}
For sufficiently large $\ell$, $u_r^\ell\ne0$.  If $\beta_I(u^\ell)=0$, its normalized contribution in \eqref{rl:eq:CB-ratio} is zero.  Otherwise every coordinate indexed by $I$ is nonzero, so $\beta_J(u^\ell)>0$, and
\begin{equation}\label{rl:eq:beta-ratio-boundary}
  \frac{\beta_I(u^\ell)}{\beta_J(u^\ell)}
  =\frac{\Delta_I^2}{\Delta_J^2}
   \frac{\lambda_h}{\lambda_r}
   \frac{(u_h^\ell)^2}{(u_r^\ell)^2}
  \longrightarrow0.
\end{equation}
If $\pi_I^\ell$ denotes the normalized coefficient in \eqref{rl:eq:CB-ratio}, then
\begin{equation}\label{rl:eq:pi-boundary}
  0\le\pi_I^\ell
  \le\frac{\beta_I(u^\ell)}{\beta_J(u^\ell)}
  \longrightarrow0.
\end{equation}
There are finitely many sets omitting $r$, and their templates are fixed finite real numbers.  The sets containing $r$ contribute zero, so \eqref{rl:eq:CB-ratio} proves \eqref{rl:eq:Psi-to-zero}.  Since $v_r^\ell\to v_r$, one has $y_r^\ell\to0$.  Thus the $I_+$ subsequence and the $I_-$ subsequence both converge to $y=0$.

\emph{Case 4: $u=0$.}
Compatibility gives $v=0$.  The $I_-$ subsequence again has zero output.  On $I_+$, \eqref{rl:eq:uniform-multiplier-bound} gives
\[
  \norm{y^\ell}\le B_p\norm{v^\ell}\longrightarrow0.
\]
Therefore $y^\ell\to0=y$.

The four cases cover every possible limit state.  Sequential continuity is equivalent to continuity in these finite-dimensional metric spaces.  The continuity of $T_p$ follows from that of $F_p$ and the continuous first component $(u,v)\mapsto v$.
\end{proof}

\section{Global \texorpdfstring{$R$}{R}-linear convergence}\label{rl:sec:rates}

\paragraph{Rate convention}
For fixed $H$ and $p$, let $z_k=T_p^k(z_0)$.  We say that the standard endpoint dynamics is state-uniformly $R$-linearly convergent if there exist $C\ge1$ and $\rho\in(0,1)$, independent of $z_0\in\Om$, such that
\[
  \norm{z_k}_\times
  \le C\rho^k\norm{z_0}_\times,
  \qquad k\ge0.
\]
Here $k$ counts completed sweeps.  Finitely terminating orbits are extended by zeros.

\subsection{Pointwise convergence input}\label{rl:sec:pointwise-input}

\begin{lemma}\label{rl:lem:realization}
Let $(u_0,u_1)\in\Om$.  There are stepsizes
\[
  \alpha_1,\ldots,\alpha_p\in[\lambda_n^{-1},\lambda_1^{-1}]
\]
such that
\begin{equation}\label{rl:eq:compatible-prefix}
  u_1=\prod_{j=1}^{p}(I-\alpha_jH)u_0.
\end{equation}
Use these steps as an admissible warm-up block for Algorithm~\ref{alg:standard-sweep}.  Until the algorithm terminates, let $G_j$ denote the history block starting at $u_j$ and ending at $u_{j+1}$.  Then
\begin{equation}\label{rl:eq:history-span-bridge}
  \Span(G_0)=\Kry_p(H,u_0),
\end{equation}
and the projected values extracted from $G_0$ agree exactly with those used in the definition of $F_p(u_0,u_1)$.  More generally, if
\[
  z_j:=(u_j,u_{j+1}),
\]
then the algorithmic endpoints and the abstract endpoints satisfy
\begin{equation}\label{rl:eq:orbit-correspondence}
  z_{j+1}=T_p(z_j)
\end{equation}
until finite termination, after which the abstract orbit reaches $(0,0)$ and remains there.  Consequently,
\begin{equation}\label{rl:eq:pointwise-endpoint}
  T_p^k(u_0,u_1)\longrightarrow(0,0)
  \qquad\text{for every }(u_0,u_1)\in\Om.
\end{equation}
\end{lemma}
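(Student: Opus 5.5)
The plan is to unwind the definitions and then invoke Theorem~\ref{thm:master}. First, by definition of $\Om$ there is a polynomial $q\in\Pp$ with $u_1=q(H)u_0$. Writing $q(t)=\prod_{j=1}^p(1-\alpha_jt)$ with $\alpha_j\in[\lambda_n^{-1},\lambda_1^{-1}]$ gives \eqref{rl:eq:compatible-prefix}. These stepsizes are positive and finite, so they form an admissible warm-up block for Algorithm~\ref{alg:standard-sweep}. If the warm-up reaches $g=0$ before completing $p$ steps, then $u_1=0$. In that case $s(u_1)=0\le p$ gives $T_p(u_0,u_1)=(0,0)$, and the abstract orbit is zero from the next step on. So we may assume the warm-up history $G_0=[u_0,\ldots]$ is a completed $p$-step block. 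Proposition~\ref{prop:krylov} then gives \eqref{rl:eq:history-span-bridge}, and $\rank G_0=\min\{p,s(u_0)\}$ by Lemma~\ref{lem:krylov-rank}.

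Next I would identify the extracted values with those in \eqref{rl:eq:K-pencil}. There are three cases.
\begin{enumerate}
\item If $s(u_0)\ge p$, then $K_p(u_0)$ is a full-column-rank basis of $\Span(G_0)$. The orthonormal pencil \eqref{eq:standard-pencil} and the pencil \eqref{rl:eq:K-pencil} have the same eigenvalues, since generalized eigenvalues are invariant under a change of basis of the same subspace. Hence the next sweep applied to $u_1$ uses exactly the reciprocals of $\theta_j(u_0)$. When $s(u_0)>p$ and the sweep completes, its endpoint is $\Psi_{u_0}(H)u_1=F_p(u_0,u_1)$; the product is independent of the ordering in \eqref{eq:order}.
\item If $s(u_0)=p$, Proposition~\ref{prop:finite-branch} gives termination within at most $p$ steps, and \eqref{rl:eq:s-equals-p-annihilation} gives $F_p=0$.
\item If $s(u_0)<p$, compression returns the active eigenvalues and the sweep again terminates by Proposition~\ref{prop:finite-branch}, while $F_p=0$ by definition.
\end{enumerate}
Even if the sweep terminates early in the regular case, the endpoint is $0$. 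In all cases the next algorithmic endpoint $u_2$, with the convention $u_2=0$ on termination, equals $F_p(u_0,u_1)$, so $z_1=T_p(z_0)$.

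Then I would induct on $j$. Once $z_j=(u_j,u_{j+1})$ is a compatible pair realized as the start and end of a completed history block $G_j$, the same argument applies verbatim. The sweep generated from $G_{j-1}$ uses the values $\theta_i(u_{j-1})\in[\lambda_1,\lambda_n]$ by Lemma~\ref{lem:enclosure}, so $(u_j,u_{j+1})\in\Om$. This yields \eqref{rl:eq:orbit-correspondence} until termination. After termination the abstract orbit reaches a state $(v,0)$ and then $(0,0)$, which is fixed since $F_p(0,0)=0$.

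Finally, for \eqref{rl:eq:pointwise-endpoint} there are two cases.
\begin{enumerate}
\item If the run terminates, the orbit is eventually $(0,0)$.
\item Otherwise Theorem~\ref{thm:master} gives $g_{k,0}\to0$. The algorithmic block starts are exactly $u_j$, because post-warm-up blocks have length $p$ by Corollary~\ref{cor:full-tail}. Hence $z_j=(u_j,u_{j+1})\to(0,0)$.
\end{enumerate}
The main obstacle is the bookkeeping at partial sweeps and at the finite-termination strata: the zero continuation must match the algorithm's stopping. The care needed is to confirm that the abstract map, evaluated at such a state, is zero whether termination occurs mid-sweep or at the sweep's end.
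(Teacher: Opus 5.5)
Your proposal is correct and follows essentially the same route as the paper. In both, the compatible pair is realized as a warm-up block, Proposition~\ref{prop:krylov} and basis invariance of the pencil identify the extracted values, the case $s(u_0)\le p$ is dispatched by Proposition~\ref{prop:finite-branch}, the argument is iterated, and Theorem~\ref{thm:master} gives the conclusion, with your version filling in bookkeeping the paper leaves implicit.

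One small slip concerns the early-termination case. There $T_p(u_0,0)=(0,0)$ holds because $F_p(u_0,0)=0$ on every stratum: the branch in \eqref{rl:eq:F-def} is selected by $s(u_0)$, not $s(u_1)$, and $\Psi_{u_0}(H)\,0=0$ on the regular branch.
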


\begin{proof}
Compatibility gives the representation
\eqref{rl:eq:compatible-prefix}; use these factors as an admissible
warm-up block. If an intermediate gradient vanishes, then $u_1=0$
and one endpoint transition reaches $(0,0)$.

Otherwise the history is completed and, by
Proposition~\ref{prop:krylov}, its span is $\Kry_p(H,u_0)$. If $s(u_0)>p$, the next endpoint is
$\Psi_{u_0}(H)u_1$. If $s(u_0)\le p$,
Proposition~\ref{prop:finite-branch} gives the zero output prescribed
by $F_p$. Repeating this argument shows that the algorithmic and
abstract endpoints agree until termination. The conclusion then
follows from Theorem~\ref{thm:master}.
\end{proof}

For an actual run with arbitrary positive finite warm-up steps, we initialize the endpoint estimate at
\[
  (u_0,u_1)=(g_{1,0},g_{2,0}),
\]
the start/end pair of the first completed standard sweep. If the warm-up steps themselves lie in $[\lambda_n^{-1},\lambda_1^{-1}]$, the warm-up start/end pair may be used as the initial state.

\subsection{The standard rank-compressed method}

\begin{theorem}\label{rl:thm:main-fixed}
Fix the reduced matrix $H$ in \eqref{eq:simple-spectrum} and an integer $p\ge1$.  Then there exist constants
\[
  C_* = C_*(H,p)\ge1,
  \qquad \rho_* = \rho_*(H,p)\in(0,1),
\]
such that for every initial state $z_0=(u_0,u_1)\in\Om$, if
\[
  z_k=T_p^k(z_0)=(u_k,u_{k+1}),
\]
then
\begin{equation}\label{rl:eq:endpoint-rate}
  \norm{(u_k,u_{k+1})}_\times
  \le C_*\rho_*^k\norm{(u_0,u_1)}_\times,
  \qquad k\ge0.
\end{equation}
Finite termination is included by extending the endpoint sequence with zeros.
\end{theorem}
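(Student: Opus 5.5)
The plan is to apply the abstract compactness principle, Theorem~\ref{rl:thm:homogeneous-rate}, with $X=\R^n\times\R^n$ equipped with the product norm $\norm{\cdot}_\times$ from \eqref{rl:eq:product-norm}, $\Omega=\Om$, and $T=T_p$. The theorem's conclusion \eqref{rl:eq:Rlinear-abstract} is exactly \eqref{rl:eq:endpoint-rate}, with $C_*=C$ and $\rho_*=\rho$. These constants come from a finite subcover of the unit sphere $\Sigma$ of $\Om$, so they depend only on $\Om$ and $T_p$, that is, only on $H$ and $p$. In particular they are independent of $z_0$.

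The four hypotheses would be checked in order, each from an earlier result:
\begin{enumerate}[label=(\alph*)]
\item $\Om$ is a closed cone in a finite-dimensional space, by Lemma~\ref{rl:lem:cone-closed}.
\item $T_p$ maps $\Om$ into itself and is positively homogeneous of degree one, by Proposition~\ref{rl:prop:selfmap-homogeneous}.
\item $T_p$ is continuous on all of $\Om$, including the termination strata $s(u)\le p$ and the origin, by Theorem~\ref{rl:thm:joint-continuity}.
\item $T_p^k(z_0)\to 0$ for every $z_0\in\Om$, by Lemma~\ref{rl:lem:realization}, equation \eqref{rl:eq:pointwise-endpoint}. That lemma realizes an arbitrary compatible pair as the warm-up block of an actual run of Algorithm~\ref{alg:standard-sweep}, identifies the abstract orbit with the algorithmic endpoints, and then invokes Theorem~\ref{thm:master}.
\end{enumerate}

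Two points need a brief remark. First, when the orbit terminates finitely, the extension by zeros agrees with the abstract orbit. Once $F_p$ outputs $0$, the next state is $(v,0)$, and after that $(0,0)$, which is fixed because $T_p(0,0)=0$. So the bound covers terminating orbits automatically. Second, the passage to the reduced coordinates \eqref{eq:simple-spectrum} is isometric for the product norm. Since the statement already fixes the reduced $H$, this causes no dependence on $z_0$.

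The main obstacle is ensuring that the pointwise convergence hypothesis holds for \emph{every} state in the closed cone, not only for states arising from runs with arbitrary warm-up steps. This is precisely what Lemma~\ref{rl:lem:realization} secures, because compatibility supplies admissible stepsizes in $[\lambda_n^{-1},\lambda_1^{-1}]$. Boundary states with $s(u_0)\le p$ present no difficulty, since their orbits vanish after at most two transitions. Continuity across the rank-changing strata, the other delicate ingredient, is already settled by Theorem~\ref{rl:thm:joint-continuity}. The proof would therefore reduce to citing these results and naming the constants.
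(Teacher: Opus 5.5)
Your proposal is correct and follows the paper's proof exactly: Lemma~\ref{rl:lem:cone-closed}, Proposition~\ref{rl:prop:selfmap-homogeneous}, Theorem~\ref{rl:thm:joint-continuity}, and Lemma~\ref{rl:lem:realization} verify the hypotheses of Theorem~\ref{rl:thm:homogeneous-rate} for $T_p$ on the compatible cone with the product norm, which yields constants depending only on $(H,p)$. Your extra remarks are accurate and consistent with the paper: terminating orbits reach $(0,0)$ within two transitions because $F_p(u,0)=0$, and the reduction to active coordinates is isometric.
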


\begin{proof}
Lemma~\ref{rl:lem:cone-closed} shows that $\Om$ is a finite-dimensional closed cone.  Proposition~\ref{rl:prop:selfmap-homogeneous} and Theorem~\ref{rl:thm:joint-continuity} show that $T_p$ is a continuous positively homogeneous self-map.  Lemma~\ref{rl:lem:realization} gives pointwise convergence for every state in $\Om$.  Theorem~\ref{rl:thm:homogeneous-rate} yields \eqref{rl:eq:endpoint-rate} with constants depending only on $(H,p)$.
\end{proof}

\subsection{Fixed spectral weights by conjugacy}

For a fixed positive spectral weight $W=\omega(H)$, let $T_{p,W}$ denote the endpoint transition obtained from \eqref{rl:eq:T-def} by using the weighted pencil \eqref{eq:weighted-pencil} on the regular stratum and the same zero output on the two finite-termination strata.  Set $S=W^{1/2}$ and define the product-space isomorphism
\begin{equation}\label{rl:eq:state-conjugacy-map}
  \mathcal S_W:\Om\to\Om,
  \qquad
  \mathcal S_W(u,v):=(Su,Sv).
\end{equation}

\begin{proposition}\label{rl:prop:endpoint-conjugacy}
The map $\mathcal S_W$ is a linear homeomorphism of $\Om$ onto itself, and
\begin{equation}\label{rl:eq:endpoint-conjugacy}
  \mathcal S_W T_{p,W}=T_p\mathcal S_W,
  \qquad\text{equivalently}\qquad
  T_{p,W}=\mathcal S_W^{-1}T_p\mathcal S_W.
\end{equation}
Thus the two maps have identical complete-sweep polynomials at corresponding conjugate states, conjugate orbit structure, identical Krylov ranks, and identical finite-termination indices.
\end{proposition}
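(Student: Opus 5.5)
First I will show that $\mathcal S_W$ maps $\Om$ onto itself. Let $(u,v)\in\Om$ with $v=q(H)u$ and $q\in\Pp$. Since $S=W^{1/2}=\omega(H)^{1/2}$ is a function of $H$, it commutes with $q(H)$, so $Sv=q(H)Su$ and $(Su,Sv)\in\Om$. The same argument with $S^{-1}$, which is also a positive spectral function of $H$, gives the inverse map. Both maps are linear and finite dimensional, so $\mathcal S_W$ is a linear homeomorphism of $\Om$. It also preserves active support, $S(Su)=S(u)$, because in the reduced coordinates \eqref{eq:simple-spectrum} the matrix $S$ is diagonal with positive entries $\sqrt{w_i}$. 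Hence $(u,v)$ and $\mathcal S_W(u,v)$ lie in the same stratum: regular, full-rank termination, or rank-deficient termination.

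Next I will verify the conjugacy \eqref{rl:eq:endpoint-conjugacy} stratum by stratum. On both termination strata $s(u)\le p$, we have $T_{p,W}(u,v)=(v,0)$ and $T_p(Su,Sv)=(Sv,0)$, so $\mathcal S_WT_{p,W}(u,v)=T_p\mathcal S_W(u,v)$. On the regular stratum $s(u)>p$, Lemma~\ref{lem:krylov-rank} shows that $K_p(u)$ has full column rank, and $K_p(Su)=SK_p(u)$ because $S$ commutes with $H$. Applying \eqref{eq:conjugate-pencil} with $B=K_p(u)$ gives
\[
K_p(Su)^THK_p(Su)=K_p(u)^TWHK_p(u),\qquad K_p(Su)^TK_p(Su)=K_p(u)^TWK_p(u).
\]
So the weighted generalized eigenvalues on $\Kry_p(H,u)$ are the standard values $\theta_j(Su)$. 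Therefore the weighted sweep polynomial $\Psi^W_u$ equals $\Psi_{Su}$, and
\[
S\,\Psi^W_u(H)v=\Psi_{Su}(H)Sv=F_p(Su,Sv).
\]
Taking first components as well yields $\mathcal S_WT_{p,W}=T_p\mathcal S_W$.

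The remaining claims then follow. Identical complete-sweep polynomials at conjugate states were just shown. Conjugate orbit structure follows by induction: $T_{p,W}^k=\mathcal S_W^{-1}T_p^k\mathcal S_W$. Krylov ranks agree because $\rank K_p(Su)=\rank SK_p(u)=\rank K_p(u)$. Finite-termination indices agree because strata are preserved along the orbits and $\mathcal S_W$ maps zero states exactly to zero states.

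I expect the only delicate point to be bookkeeping rather than analysis: making sure $T_{p,W}$ is well defined on $\Om$ in the weighted sense, including that its weighted values lie in $[\lambda_1,\lambda_n]$ so that the output stays compatible. The conjugacy itself supplies this, since it expresses $T_{p,W}$ as $\mathcal S_W^{-1}T_p\mathcal S_W$ and Proposition~\ref{rl:prop:selfmap-homogeneous} already shows that $T_p$ is a self-map of $\Om$.
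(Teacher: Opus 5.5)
Your proposal is correct and follows essentially the same route as the paper. You use the commutation of $S=W^{1/2}$ with $H$ to show that $\mathcal S_W$ maps $\Om$ onto itself, the identity $K_p(Su)=SK_p(u)$ to turn the weighted pencil at $u$ into the standard pencil at $Su$, and the preservation of active support so that both termination strata give zero output. Your extra remark that the conjugacy itself shows $T_{p,W}$ is a self-map of $\Om$ is a correct addition.
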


\begin{proof}
If $v=q(H)u$ for some $q\in\Pp$, then $Sv=q(H)Su$ because $S$ commutes with $H$; the same argument with $S^{-1}$ shows that $\mathcal S_W$ maps $\Om$ onto itself.  Moreover,
\[
  K_p(Su)=SK_p(u),
\]
and hence
\[
  K_p(Su)^THK_p(Su)=K_p(u)^TWHK_p(u),
  \qquad
  K_p(Su)^TK_p(Su)=K_p(u)^TWK_p(u).
\]
Therefore the standard pencil at $Su$ is exactly the weighted pencil at $u$.  Since $S$ is invertible and acts by a positive scalar on each eigenspace of $H$, it preserves active support, Krylov rank, and both finite-termination strata.  On the regular stratum the two maps consequently use the same complete-sweep polynomial, while on the termination strata both outputs are zero.  This proves \eqref{rl:eq:endpoint-conjugacy}.
\end{proof}

\begin{corollary}\label{rl:cor:weighted-endpoint-rate}
Let $C_*$ and $\rho_*$ be the standard constants in Theorem~\ref{rl:thm:main-fixed}.  For every fixed positive spectral weight $W=\omega(H)$, every $z\in\Om$, and every $k\ge0$,
\begin{equation}\label{rl:eq:weighted-endpoint-rate}
  \norm{T_{p,W}^k(z)}_\times
  \le C_*\sqrt{\kappa_2(W)}\,\rho_*^k\norm{z}_\times.
\end{equation}
In particular, every fixed weight has the same admissible decay factor $\rho_* = \rho_*(H,p)$.  For $W=H^a$ with arbitrary $a\in\R$,
\begin{equation}\label{rl:eq:power-weight-prefactor}
  \sqrt{\kappa_2(W)}=\kappa_2(H)^{|a|/2}.
\end{equation}
\end{corollary}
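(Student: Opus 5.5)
The plan is to push the standard estimate of Theorem~\ref{rl:thm:main-fixed} through the conjugacy of Proposition~\ref{rl:prop:endpoint-conjugacy}. The extra factor $\sqrt{\kappa_2(W)}$ comes from comparing the product norm before and after the map $\mathcal S_W$. First, iterating \eqref{rl:eq:endpoint-conjugacy} by induction on $k$ gives
\[
  T_{p,W}^k=\mathcal S_W^{-1}T_p^k\mathcal S_W,\qquad k\ge0,
\]
because each intermediate factor $\mathcal S_W\mathcal S_W^{-1}$ cancels. Since $\mathcal S_W$ maps $\Om$ onto itself, the point $\mathcal S_W z$ is an admissible initial state for Theorem~\ref{rl:thm:main-fixed}.

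Next I record the operator norms of the state map with respect to $\norm{\cdot}_\times$. The map acts blockwise as $(u,v)\mapsto(Su,Sv)$ with $S=W^{1/2}$, so $\norm{\mathcal S_W(u,v)}_\times\le\norm{S}_2\norm{(u,v)}_\times$. Likewise $\norm{\mathcal S_W^{-1}(u,v)}_\times\le\norm{S^{-1}}_2\norm{(u,v)}_\times$. Because $W$ is symmetric positive definite, $\norm{S}_2=\norm{W}_2^{1/2}$ and $\norm{S^{-1}}_2=\norm{W^{-1}}_2^{1/2}$, and hence $\norm{S}_2\norm{S^{-1}}_2=\sqrt{\kappa_2(W)}$. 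Chaining these bounds gives
\[
  \norm{T_{p,W}^k z}_\times
  \le\norm{S^{-1}}_2\,C_*\rho_*^k\,\norm{\mathcal S_W z}_\times
  \le C_*\sqrt{\kappa_2(W)}\,\rho_*^k\norm{z}_\times,
\]
which is \eqref{rl:eq:weighted-endpoint-rate}. The decay factor $\rho_*$ is untouched by the conjugation, so every fixed weight shares it.

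One point needs care. Theorem~\ref{rl:thm:main-fixed} is stated in the reduced coordinates \eqref{eq:simple-spectrum}, while $W$ is defined spectrally. In those coordinates $W$ is diagonal with entries $\omega(\lambda_i)$, so the reduction is isometric and commutes with $S$, and the operator-norm computation is unaffected. One might worry that $\kappa_2(W)$ should be computed only over the active eigenvalues, but the full $\kappa_2(W)$ is never smaller than its active restriction. Using it therefore still gives a valid upper bound.

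For the power weights $W=H^a$, the eigenvalues are $\lambda_i^a$. If $a\ge0$, the extreme eigenvalues of $W$ are $\lambda_{\max}^a$ and $\lambda_{\min}^a$, so $\kappa_2(W)=\kappa_2(H)^a$. If $a<0$, the roles of $\lambda_{\max}$ and $\lambda_{\min}$ swap, giving $\kappa_2(W)=\kappa_2(H)^{-a}$. In both cases $\kappa_2(W)=\kappa_2(H)^{|a|}$, and taking square roots gives \eqref{rl:eq:power-weight-prefactor}. I expect no real obstacle here; the only step needing attention is the norm bookkeeping for $\mathcal S_W$ and its inverse in the product norm.
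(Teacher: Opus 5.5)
Your proposal is correct and matches the paper's proof: you write $T_{p,W}^k=\mathcal S_W^{-1}T_p^k\mathcal S_W$, apply Theorem~\ref{rl:thm:main-fixed} at the admissible state $\mathcal S_W z$, and bound the two conjugating maps by $\norm{S^{-1}}_2$ and $\norm{S}_2$, whose product is $\sqrt{\kappa_2(W)}$. The power-weight identity is the same eigenvalue computation the paper cites, and your product-norm bound and the split into $a\ge0$ and $a<0$ spell out steps the paper states in one line.
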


\begin{proof}
Iterating \eqref{rl:eq:endpoint-conjugacy} and applying Theorem~\ref{rl:thm:main-fixed} give
\[
  \norm{T_{p,W}^k(z)}_\times
  \le \norm{S^{-1}}_2 C_*\rho_*^k\norm{\mathcal S_Wz}_\times
  \le C_*\norm{S^{-1}}_2\norm{S}_2\rho_*^k\norm{z}_\times.
\]
Since $S=W^{1/2}$, one has $\norm{S^{-1}}_2\norm{S}_2=\sqrt{\kappa_2(W)}$.  Equation \eqref{rl:eq:power-weight-prefactor} follows from the spectral condition number of $H^a$.
\end{proof}

\subsection{Inner iterates and optimization errors}

Let
\begin{equation}\label{rl:eq:MH}
  M_H:=\max_{\lambda,\theta\in[\lambda_1,\lambda_n]}
  \abs{1-\lambda/\theta}
  \le\max\left\{1,\frac{\lambda_n}{\lambda_1}-1\right\},
\end{equation}
and define
\begin{equation}\label{rl:eq:MHp}
  M_{H,p}:=\max_{0\le j\le p}M_H^j=\max\{1,M_H^p\}.
\end{equation}
For a nonterminating actual run, reindex the gradients from the start of the first sweep after the warm-up by
\begin{equation}\label{rl:eq:single-index}
  \widehat g_{kp+j}:=g_{k+1,j},
  \qquad k\ge0,\quad0\le j\le p-1.
\end{equation}
At a block boundary, \eqref{eq:block-indexing} gives
\[
  \widehat g_{(k+1)p}=g_{k+1,p}=g_{k+2,0},
\]
so the endpoint relation is
\begin{equation}\label{rl:eq:endpoint-single-index}
  u_k=\widehat g_{kp}=g_{k+1,0},
  \qquad k\ge0.
\end{equation}
The within-sweep bound, including the endpoint $j=p$, is
\begin{equation}\label{rl:eq:inner-bound}
  \norm{\widehat g_{kp+j}}\le M_H^j\norm{u_k},
  \qquad0\le j\le p.
\end{equation}
Any exact shorter block belongs to the finite-termination branch and does not occur on a nonterminating run.

\begin{corollary}\label{rl:cor:global-step-rate}
Fix $H$ and $p$, let $W=\omega(H)\succ0$, and consider the corresponding rank-compressed method with arbitrary positive finite warm-up stepsizes.  Define
\begin{equation}\label{rl:eq:weighted-inner-constants}
  \widehat\rho_*:=\rho_*^{1/p},
  \qquad
  \widehat C_W:=
  \max\left\{1,
  M_{H,p}C_*\sqrt{\kappa_2(W)}\,
  \widehat\rho_*^{-(p-1)}\right\},
\end{equation}
where $C_*$ and $\rho_*$ are the standard endpoint constants in Theorem~\ref{rl:thm:main-fixed}.  Either the method terminates finitely or the reindexed sequence \eqref{rl:eq:single-index} satisfies
\begin{equation}\label{rl:eq:global-gradient-rate}
  \norm{\widehat g_\ell}
  \le \widehat C_W\widehat\rho_*^\ell
  \norm{(u_0,u_1)}_\times,
  \qquad \ell\ge0,
\end{equation}
where $(u_0,u_1)=(g_{1,0},g_{2,0})$ is the start/end pair of the first completed sweep after the warm-up.  If $\widehat x_\ell$ denotes the correspondingly reindexed iterate, then
\begin{equation}\label{rl:eq:iterate-error-rate}
  \norm{\widehat x_\ell-x^\star}
  \le \lambda_1^{-1}\widehat C_W\widehat\rho_*^\ell
  \norm{(u_0,u_1)}_\times.
\end{equation}
The objective gap admits the bound
\begin{equation}\label{rl:eq:objective-gap-rate}
  0\le f(\widehat x_\ell)-f(x^\star)
  \le\frac{\widehat C_W^2}{2\lambda_1}
  (\widehat\rho_*^2)^\ell\norm{(u_0,u_1)}_\times^2.
\end{equation}
Thus every fixed spectral weight admits the same gradient and iterate decay factor $\widehat\rho_*$ and the same objective-gap factor $\widehat\rho_*^2$.
\end{corollary}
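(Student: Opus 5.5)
The plan is to reduce everything to the weighted endpoint estimate of Corollary~\ref{rl:cor:weighted-endpoint-rate}. We then convert sweep counts into step counts through the within-sweep bound \eqref{rl:eq:inner-bound}.

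\textbf{Step 1: the state is compatible, so the endpoint estimate applies.} Assume the run does not terminate. By Corollary~\ref{cor:weighted-global} and the conjugacy of Proposition~\ref{prop:spectral-conjugacy}, all post-warm-up blocks have length $p$. The block $G_1$ is built from values extracted from $G_0$. By Lemma~\ref{lem:enclosure}, applied to the conjugated standard pencil, these values lie in $[\lambda_1,\lambda_n]$, so the first sweep's reciprocal stepsizes lie in $[\lambda_n^{-1},\lambda_1^{-1}]$. Hence $(u_0,u_1)=(g_{1,0},g_{2,0})\in\Om$, regardless of the warm-up stepsizes. By Proposition~\ref{rl:prop:endpoint-conjugacy} and Lemma~\ref{rl:lem:realization}, the subsequent actual endpoint pairs coincide with $T_{p,W}^k(u_0,u_1)$. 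This realization step is the one place that needs care. For the weighted method it is the conjugated form of Lemma~\ref{rl:lem:realization}: the transformed run $S\widehat g$ is a standard run whose endpoints follow $T_p$, and we pull back by $\mathcal S_W^{-1}$. Corollary~\ref{rl:cor:weighted-endpoint-rate} then gives
\[
\norm{u_k}\le\norm{(u_k,u_{k+1})}_\times\le C_*\sqrt{\kappa_2(W)}\,\rho_*^k\norm{(u_0,u_1)}_\times .
\]

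\textbf{Step 2: inner gradients.} Write $\ell=kp+j$ with $0\le j\le p-1$. Every step uses a factor $I-H/\theta$ with $\theta\in[\lambda_1,\lambda_n]$, whose norm is at most $M_H$. So \eqref{rl:eq:inner-bound} gives $\norm{\widehat g_\ell}\le M_{H,p}\norm{u_k}$. Since $\rho_*^k=\widehat\rho_*^{kp}=\widehat\rho_*^{\ell-j}\le\widehat\rho_*^{\ell}\,\widehat\rho_*^{-(p-1)}$, because $\widehat\rho_*<1$ and $j\le p-1$, we obtain \eqref{rl:eq:global-gradient-rate} with the constant $\widehat C_W$. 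The $\max\{1,\cdot\}$ in its definition is harmless.

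\textbf{Step 3: iterates and objective.} In translated coordinates $x-x^\star=H^{-1}g$, so $\norm{\widehat x_\ell-x^\star}\le\lambda_1^{-1}\norm{\widehat g_\ell}$, which gives \eqref{rl:eq:iterate-error-rate}. The objective gap equals $\tfrac12 g^TH^{-1}g\le\norm{g}^2/(2\lambda_1)$; squaring the gradient bound gives \eqref{rl:eq:objective-gap-rate}.
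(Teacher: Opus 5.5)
Your proposal is correct and follows the paper's proof. It combines the weighted endpoint estimate of Corollary~\ref{rl:cor:weighted-endpoint-rate} with the within-sweep bound \eqref{rl:eq:inner-bound}, converts via $\rho_*^k=\widehat\rho_*^{\ell-j}\le\widehat\rho_*^{-(p-1)}\widehat\rho_*^{\ell}$, and finishes with the gradient--error and objective-gap inequalities, while your Step~1 makes explicit a compatibility step the paper leaves implicit: $(g_{1,0},g_{2,0})\in\Om$ because the first post-warm-up sweep uses values in $[\lambda_1,\lambda_n]$, and the actual endpoints follow $T_{p,W}$ by conjugacy with a standard run.
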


\begin{proof}
By Proposition~\ref{prop:spectral-conjugacy}, a nonterminating fixed-weight run corresponds to a nonterminating standard run, so every sweep has length $p$.  Write $\ell=kp+j$ with $0\le j\le p-1$.  Equations \eqref{rl:eq:inner-bound} and \eqref{rl:eq:weighted-endpoint-rate} give
\[
  \norm{\widehat g_\ell}
  \le M_{H,p}C_*\sqrt{\kappa_2(W)}\,
       \rho_*^k\norm{(u_0,u_1)}_\times.
\]
Since $k=(\ell-j)/p$,
\[
  \rho_*^k=\widehat\rho_*^{\ell-j}
  \le\widehat\rho_*^{-(p-1)}\widehat\rho_*^\ell.
\]
Equation \eqref{rl:eq:global-gradient-rate} follows from \eqref{rl:eq:weighted-inner-constants}.
The gradient-error equivalence
\begin{equation}\label{rl:eq:gradient-error-equivalence}
  \lambda_1\norm{x-x^\star}
  \le\norm{g(x)}
  \le\lambda_n\norm{x-x^\star}
\end{equation}
gives \eqref{rl:eq:iterate-error-rate}.  Finally,
\begin{equation}\label{rl:eq:objective-gap}
  0\le f(x)-f(x^\star)
  =\tfrac12g(x)^TH^{-1}g(x)
  \le\frac{1}{2\lambda_1}\norm{g(x)}^2,
\end{equation}
which gives \eqref{rl:eq:objective-gap-rate}.
\end{proof}

\begin{corollary}\label{rl:cor:repeated-eigenvalues}
All standard and fixed-weight endpoint, inner-gradient, iterate-error, and objective-gap estimates in this section remain valid when $H$ has repeated eigenvalues.  The common decay factors $\rho_*$ and $\widehat\rho_*$ may be chosen to depend only on $p$ and the distinct eigenvalues of $H$.
\end{corollary}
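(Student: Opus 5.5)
The plan is to reduce the repeated-eigenvalue case to the distinct-eigenvalue case by the active spectral reduction (Lemma~\ref{lem:active-reduction}). Write $H=\sum_{\ell=1}^{s}\mu_\ell P_\ell$ with distinct $\mu_\ell$. For any compatible state $(u,v)\in\Omega_p(H)$, both endpoints and every subsequent polynomial update lie in $E(u)=\Span\{P_\ell u:P_\ell u\ne0\}$. We use the normalized vectors $P_\ell u/\norm{P_\ell u}$ as an orthonormal basis of $E(u)$. In these coordinates $H|_{E(u)}=\diag(\mu_\ell:\ell\in L(u))$, where $L(u)=\{\ell:P_\ell u\ne0\}$. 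The coordinate isometry preserves Euclidean and product norms, Krylov spans and ranks, projected matrices, and hence extracted values and complete-sweep polynomials. The same holds for the weighted pencils, because $W=\omega(H)$ restricts to $\diag(w_\ell)$ on $E(u)$. Consequently, each orbit of the endpoint map for $H$ is isometric to an orbit of the endpoint map for a diagonal matrix $H_L:=\diag(\mu_\ell:\ell\in L)$. Here $L$ ranges over nonempty subsets of $\{1,\ldots,s\}$, and the reduced states lie in $\Omega_p(H_L)$.

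Next, we need a single pair of constants. The reduced matrices $H_L$ are finite in number, at most $2^s-1$. Each one is of the form \eqref{eq:simple-spectrum}, so Theorem~\ref{rl:thm:main-fixed} supplies constants $C_*(H_L,p)$ and $\rho_*(H_L,p)$. We set
\[
  C_*:=\max_L C_*(H_L,p),\qquad \rho_*:=\max_L\rho_*(H_L,p)<1,
\]
which depend only on $p$ and $\{\mu_\ell\}$. One compatibility point has to be checked. The reduced state must lie in $\Omega_p(H_L)$, whereas the original state lies in $\Omega_p(H)$, whose stepsize box is $[\mu_s^{-1},\mu_1^{-1}]$. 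This box may be larger than $[\max_L\mu^{-1},\min_L\mu^{-1}]$. I expect this to be the main obstacle.

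The fix is to avoid restricting further than necessary. On each subset $L$ the polynomial $q$ is evaluated only at the $\mu_\ell$ with $\ell\in L$. The dynamics and the closed-cone argument of Theorem~\ref{rl:thm:homogeneous-rate} go through verbatim if we define the cone for $H_L$ using the larger box $[\mu_s^{-1},\mu_1^{-1}]$ in \eqref{rl:eq:Pp-H}. Closedness (Lemma~\ref{rl:lem:cone-closed}) uses only compactness of the box. The self-map property (Proposition~\ref{rl:prop:selfmap-homogeneous}) uses Lemma~\ref{lem:enclosure}, which places the Ritz values inside $[\mu_1,\mu_s]$. Continuity (Theorem~\ref{rl:thm:joint-continuity}) requires only the uniform multiplier bound, with $B_p$ computed from $\mu_1,\mu_s$. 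Pointwise convergence (Lemma~\ref{rl:lem:realization} together with Theorem~\ref{thm:master}) needs the warm-up stepsizes only to be positive. Alternatively, and more simply, we can avoid subsets altogether: work on the whole space $\R^s$ with $H_{\mathrm{red}}=\diag(\mu_1,\ldots,\mu_s)$, and allow zero coordinates for inactive eigenvalues. States with inactive components are already covered by the support formalism ($S(u)$, $s(u)$) of Section~\ref{rl:sec:cone}. Under this choice the map $u\mapsto(\norm{P_\ell u})_\ell$, with appropriate signs fixed per orbit, is an isometric embedding of each orbit into the cone $\Omega_p(H_{\mathrm{red}})$. That cone has exactly the same stepsize box, so the compatibility issue disappears. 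I will take this route, and the single constants are then $C_*(H_{\mathrm{red}},p)$ and $\rho_*(H_{\mathrm{red}},p)$.

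Finally, we transfer the remaining estimates. The weighted endpoint bound follows from Proposition~\ref{rl:prop:endpoint-conjugacy} and Corollary~\ref{rl:cor:weighted-endpoint-rate}. These hold because $S=W^{1/2}$ acts as $\sqrt{w_\ell}$ on each eigenspace, so $\norm{S}_2\norm{S^{-1}}_2=\sqrt{\kappa_2(W)}$ exactly as before. The inner-gradient, iterate-error and objective-gap bounds of Corollary~\ref{rl:cor:global-step-rate} use only $M_H$, $\lambda_{\min}$ and $\lambda_{\max}$, all of which are determined by the distinct eigenvalues. Therefore $\widehat\rho_*=\rho_*^{1/p}$ also depends only on $p$ and the distinct spectrum.
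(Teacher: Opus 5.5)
Your final route is the paper's argument. You embed each orbit isometrically into the compatible cone of $H_{\mathrm{red}}=\diag(\mu_1,\ldots,\mu_s)$; that cone admits every active coordinate pattern and has the same stepsize box $[\mu_s^{-1},\mu_1^{-1}]$, so the single pair $C_*(H_{\mathrm{red}},p)$, $\rho_*(H_{\mathrm{red}},p)$ applies. Your stepsize-box remark also correctly explains why one should not reduce further to the active subset.

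One wording fix: the embedding should be the linear signed-coordinate map $x\mapsto\bigl(\langle x,P_\ell u\rangle/\norm{P_\ell u}\bigr)_{\ell\in L(u)}$, set to zero for inactive $\ell$ and taken relative to the orbit's initial state $u$. It should not be $(\norm{P_\ell x})_\ell$ with fixed signs, because these coordinates can change sign along an orbit.
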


\begin{proof}
Apply the isometric spectral-projector reduction described in Sections~\ref{sec:setup} and~\ref{rl:sec:sweeps}. It preserves compatible endpoint pairs, projected pencils, all polynomial endpoint and inner-sweep updates, and their Euclidean norms. The reduced endpoint cone contains every active coordinate pattern, so the reduced constants apply uniformly to all initial active subsets. The iterate-error and objective-gap estimates then follow from the same spectral bounds.
\end{proof}

\section{Special cases and relation to earlier analyses}\label{sec:scope}

\begin{corollary}\label{cor:standard-harmonic}
The global-convergence and $R$-linear results apply to
\begin{enumerate}[label=\textnormal{(\roman*)},leftmargin=2.2em]
\item standard Ritz LMSD, $W=I$;
\item harmonic-Ritz LMSD, $W=H$;
\item every fixed power weight $W=H^a$, $a\in\R$;
\item the delayed BB1 and BB2 recurrences obtained for $p=1$.
\end{enumerate}
All four cases admit the standard endpoint decay factor $\rho_*(H,p)$ and the standard per-step factor $\widehat\rho_* = \rho_*^{1/p}$.  The endpoint prefactor is $C_*$ for $W=I$, at most $C_*\sqrt{\kappa_2(H)}$ for $W=H$, and at most $C_*\kappa_2(H)^{|a|/2}$ for $W=H^a$.
\end{corollary}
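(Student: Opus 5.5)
The plan is to read Corollary~\ref{cor:standard-harmonic} as a specialization of the general fixed-weight results. Each of the four cases will be exhibited as a positive spectral weight $W=\omega(H)$ in the sense of \eqref{eq:spectral-weight-definition}. Global convergence then follows from Corollary~\ref{cor:weighted-global}. The $R$-linear endpoint and per-step estimates follow from Theorem~\ref{rl:thm:main-fixed}, Corollary~\ref{rl:cor:weighted-endpoint-rate}, and Corollary~\ref{rl:cor:global-step-rate}. Corollary~\ref{rl:cor:repeated-eigenvalues} removes the simple-spectrum reduction.

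First I will identify the weights. Take $W=I$ for case (i), $W=H$ for case (ii), and $W=H^a$ for case (iii); the pencils \eqref{eq:standard-harmonic-special} and \eqref{eq:power-pencil} are exactly \eqref{eq:weighted-pencil} with these choices. Each is of the form $\omega(H)$ with $\omega(\mu_\ell)=\mu_\ell^a>0$, since $\mu_\ell>0$. For case (iv) I set $p=1$. A one-column history $G_k=[g]$ gives the scalar pencil $g^TWHg=\theta\,g^TWg$. With $W=I$ this is $\theta=g^THg/g^Tg$, and since $s=-\alpha g$ and $y=Hs$, its reciprocal is the BB1 step $s^Ts/s^Ty$. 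With $W=H$ it is $\theta=g^TH^2g/g^THg$, whose reciprocal is the BB2 step $s^Ty/y^Ty$. Applying the step one sweep later is precisely the delayed BB recurrence. I need only check that this one-step delay matches the quadratic BB indexing used in the statement, i.e.\ the previous step's $s,y$. Rank compression is automatic here, since a nonzero one-column history has rank one.

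Next come the constants. The decay factors $\rho_*(H,p)$ and $\widehat\rho_*=\rho_*^{1/p}$ are independent of $W$ by Corollaries~\ref{rl:cor:weighted-endpoint-rate} and~\ref{rl:cor:global-step-rate}. For the prefactor, \eqref{rl:eq:power-weight-prefactor} gives $\sqrt{\kappa_2(H^a)}=\kappa_2(H)^{|a|/2}$. This yields $C_*$ for $a=0$ (since $\kappa_2(I)=1$) and $C_*\sqrt{\kappa_2(H)}$ for $a=1$. The bound is stated as ``at most'' because, after active reduction, the relevant condition number is that of $W$ restricted to the active eigenspaces, which can only be smaller.

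The only step that needs real care is the BB identification in case (iv): matching the index delay and confirming that the Ritz value of the stored pre-step gradient equals the classical BB quotient. The rest is bookkeeping that invokes the earlier corollaries.
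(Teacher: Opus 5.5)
Your proposal is correct and follows the same route as the paper. Both identify each case as a positive spectral weight, with the $p=1$ Rayleigh quotients giving the BB1 and BB2 steps. Both then invoke the endpoint conjugacy, Corollary~\ref{rl:cor:weighted-endpoint-rate}, and \eqref{rl:eq:power-weight-prefactor}. Your active-reduction explanation of ``at most'' is unnecessary, since the phrase only records that $C_*\sqrt{\kappa_2(W)}$ is an upper bound on the prefactor, but it does no harm.
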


\begin{proof}
These choices are positive spectral weights, and for $p=1$ the corresponding reciprocal generalized Rayleigh quotients are the quadratic BB1 and BB2 stepsizes.  Apply Proposition~\ref{rl:prop:endpoint-conjugacy}, Corollary~\ref{rl:cor:weighted-endpoint-rate}, and \eqref{rl:eq:power-weight-prefactor}.
\end{proof}

Curtis and Guo \cite{CurtisGuo2018} obtain finite-cycle contractions for standard and harmonic-Ritz LMSD under Assumption~3.4. Their constants inherit the normalized-conditioning parameter through equation~(3.12c) and Lemmas~3.10--3.11. Theorem~\ref{rl:thm:main-fixed} instead uses the compatible-state formulation and algebraic rank compression, and Corollary~\ref{rl:cor:weighted-endpoint-rate} transfers the resulting rate to fixed positive spectral weights by conjugacy.

For $W=I$, Theorem~\ref{thm:master} also recovers Fletcher's global-convergence result \cite{Fletcher2012} through the complete-sweep determinant representation.

\newcommand{\NumericalRankTolerance}{10^{-12}}
\newcommand{\NumericalStoppingTolerance}{10^{-11}}

\section{Numerical diagnostics}\label{sec:numerics}

All computations were carried out in double precision with MATLAB R2023a. Ritz values were applied in nonincreasing order. Power weights were formed in the logarithmic domain and normalized by a common positive scale. All experiment uses the known diagonal spectral representation.

A history matrix $G$ was assigned numerical rank equal to the number of singular values satisfying
\begin{equation}\label{eq:numerical-rank-rule}
  \sigma_j>
  \max\bigl\{
    \NumericalRankTolerance\,\sigma_1,\,
    \eps_{\rm mach}\max(\operatorname{size}(G))\sigma_1
  \bigr\}.
\end{equation}
The retained left singular vectors formed the basis of the compressed projected pencil.  Iterative runs stopped when
\[
  \frac{\norm{g}}{\norm{g_0}}
  \le\NumericalStoppingTolerance,
\]
and Cauchy steps were used for the warm-up block.

For reproducibility, define the deterministic coefficients
\begin{equation}\label{eq:deterministic-gradient}
  d_i(s)
  :=1+\sin\bigl(0.37(i+s)\bigr)
    +0.3\cos\bigl(1.17(i+s)\bigr)
    +0.5\sin\bigl(0.11(i+s)\bigr).
\end{equation}
The initial gradients used below are specified in terms of these coefficients.

The experiment combines the power-weight comparison, repeated eigenvalues, and actual-history conditioning. We take $p=5$ and twenty distinct eigenvalues, consisting of four equally spaced points in each of
\[
 [1,2],\quad [10,15],\quad [50,60],\quad [200,250],\quad [800,1000].
\]
Every distinct eigenvalue has multiplicity three, so the unreduced dimension is $N=60$.  In the diagonal spectral coordinates, the initial gradient is
\[
  (g_0)_i=d_i(7),
  \qquad i=1,\ldots,60,
\]
and therefore has a nonzero projection onto every eigenspace.  We compare
\[
  a\in\{-1,-\tfrac12,0,\tfrac12,1,\tfrac32,2\},
  \qquad W=H^a.
\]

Since the minimizer has been translated to the origin, the objective gap is evaluated in spectral coordinates as
\[
  f(x_j)-f_\star
  =\frac12g_j^TH^{-1}g_j.
\]
The quantity displayed in the left panel is the relative gap
\[
  r_j
  :=\frac{f(x_j)-f_\star}{f(x_0)-f_\star}.
\]
The left panel of Figure~\ref{fig:weight-objective-conditioning} shows the raw
relative objective gap after every gradient step.  The right panel shows
\[
  \chi_k=\frac{\norm{g_{k,0}}}{\sigma_{\min}(G_k)}
\]
for the standard and harmonic choices, which are the two variants considered in the earlier conditioning-based analysis.

\begin{figure}[!htbp]
  \centering
  \IfFileExists{fig_weight_objective_conditioning.pdf}{%
    \includegraphics[width=\textwidth]{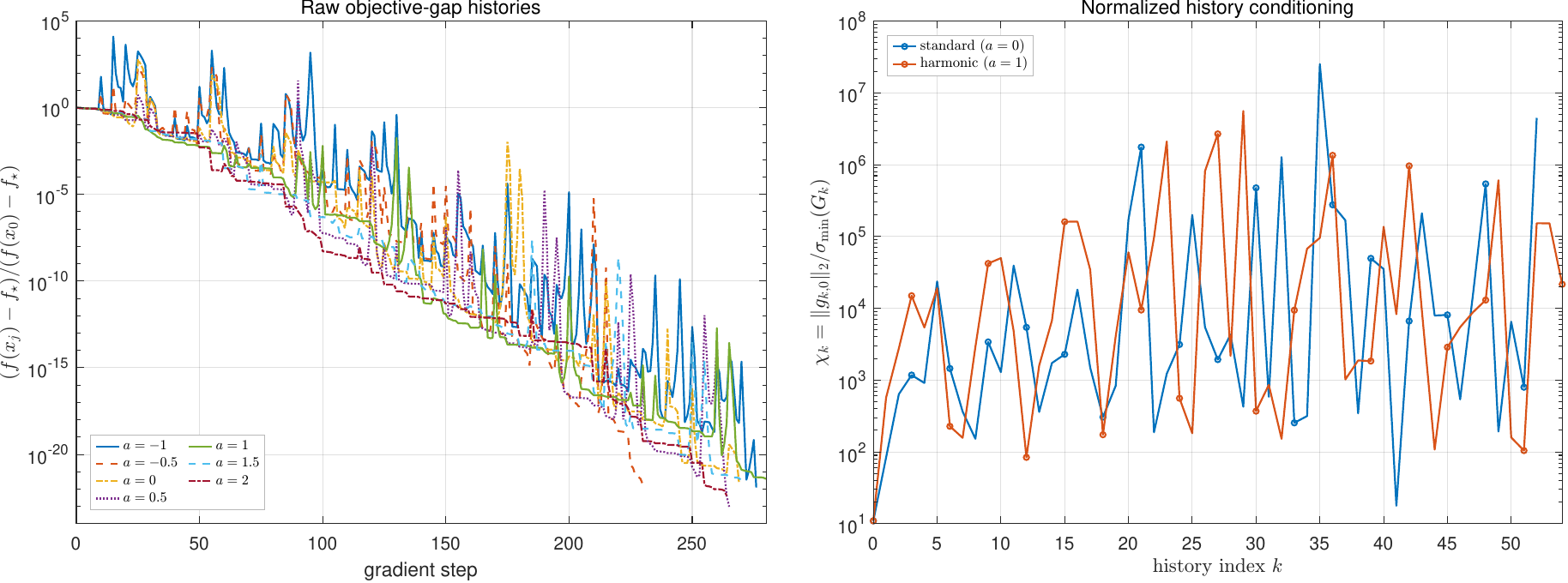}%
  }{%
    \fbox{\parbox[c][4.0cm][c]{0.94\textwidth}{\centering
      Run \texttt{run\_weighted\_lmsd\_revised\_experiments.m} to generate
      \texttt{fig\_weight\_objective\_conditioning.pdf}.}}%
  }
  \caption{A repeated-spectrum quadratic with $N=60$, twenty distinct eigenvalues, and $p=5$.}
  \label{fig:weight-objective-conditioning}
\end{figure}

\begin{table}[!htbp]
\centering
\caption{Diagnostics for the repeated-spectrum quadratic.  $A_k=\max_{0\le j\le p}\norm{g_{k,j}}/\norm{g_{k,0}}$.}
\label{tab:weight-convergence}
\small
\setlength{\tabcolsep}{4pt}
\begin{tabular}{@{}crrrr@{}}
\toprule
$a$ & \shortstack{gradient\\steps} & \shortstack{weighted\\sweeps} &
$\max_k\chi_k$ & $\max_k A_k$\\
\midrule
$-1$   & 276 & 55 & $1.30\times10^{8}$ & $1.33\times10^{3}$\\
$-0.5$ & 231 & 46 & $2.35\times10^{8}$ & $1.55\times10^{5}$\\
$0$    & 269 & 53 & $2.51\times10^{7}$ & $2.26\times10^{5}$\\
$0.5$  & 265 & 52 & $1.59\times10^{7}$ & $6.93\times10^{4}$\\
$1$    & 280 & 55 & $5.59\times10^{6}$ & $3.30\times10^{4}$\\
$1.5$  & 270 & 53 & $1.99\times10^{7}$ & $2.21\times10^{4}$\\
$2$    & 264 & 52 & $1.48\times10^{7}$ & $6.99\times10^{1}$\\
\bottomrule
\end{tabular}
\end{table}

All seven weights reach the prescribed gradient tolerance in 231--280
gradient steps.  The raw objective histories are strongly nonmonotone and can
undergo substantial transient growth, but the overall decay on the
logarithmic scale is consistent with the global $R$-linear result.

All detected histories have rank five, while the normalized quantities attain
\[
  \max_k\chi_k=2.51\times10^7
  \quad\hbox{and}\quad
  5.59\times10^6,
\]
for the standard and harmonic runs, respectively. Similar, and in some cases substantially larger, values were already reported by Curtis and Guo \cite[Table~1]{CurtisGuo2018}. The observations illustrate that full rank alone provides neither a moderate nor an a priori bound on the normalized conditioning constant.

\FloatBarrier

\section{Conclusions and limitations}
\label{sec:combined-conclusion}

Exact algebraic rank compression gives a common formulation of the regular and finite-termination strata of the standard complete-sweep method. If a block-start gradient has at most $p$ active distinct eigenvalues, the following delayed sweep terminates finitely. On nonterminating trajectories, the determinant and Cauchy--Binet representation of the complete-sweep multiplier yields pointwise global convergence across changes in Krylov rank.

Consecutive endpoints define a continuous positively homogeneous self-map on the compatible-state cone. Pointwise convergence and compactness give an endpoint $R$-linear estimate with constants $C_*(H,p)$ and $\rho_*(H,p)$ uniform over compatible initial states. Endpoint conjugacy transfers the same admissible decay factor to every fixed positive spectral weight, with Euclidean-norm prefactor at most $C_*\sqrt{\kappa_2(W)}$. The estimates extend to inner gradients, iterate errors, objective gaps, and matrices with repeated eigenvalues.

The analysis concerns strictly convex quadratics in exact arithmetic with algebraic rank compression and fixed positive spectral weights. Stability under numerical rank thresholds, trajectory-dependent weights, and nonquadratic extensions remains open.

\section*{Acknowledgments}
Shutai Yang thanks Shixiang Chen for supervising his undergraduate thesis at the University of Science and Technology of China, and Xiaowei Xu for supervising his project under the National College Students Innovation and Entrepreneurship Training Program at the same university. The present article grew out of those two undergraduate projects. He also thanks Gexuan Zhu and Bowen Deng for carefully reading an earlier Chinese-language manuscript presenting the global convergence part of this work and for offering helpful comments and suggestions.

\paragraph{Use of generative AI}
During the preparation of this manuscript, Shutai Yang used OpenAI language models for language and \LaTeX{} editing, the presentation of mathematical notation, and assistance in revising the MATLAB scripts used for the numerical experiments.  He independently checked and revised all model-assisted material and made all scholarly judgments and final decisions.  Both authors reviewed and approved the final manuscript.  The authors assume responsibility for all content.

\bibliographystyle{siamplain}
\bibliography{LMSD_refs_20260814}
\end{document}